\documentclass{article}
\usepackage[american]{babel}
\usepackage[utf8]{inputenc}      
\usepackage[T1]{fontenc}         
\usepackage{lmodern}  

\usepackage[en-GB]{datetime2}
\DTMlangsetup[en-GB]{ord=omit}
\AtBeginDocument{\DTMsetstyle{en-GB}}

\usepackage{silence}
\usepackage{amsmath,amssymb,amsfonts,amsthm,thmtools,mathrsfs,xcolor, csquotes, microtype,float,mathtools}
\usepackage[margin=35mm]{geometry}

\usepackage[backend=biber,style=alphabetic,giveninits=true,isbn=false,url=false]{biblatex}

\DeclareDelimFormat{postnotedelim}{:\addnbspace}
\ExplSyntaxOn
\cs_new_protected:Npn \ReplaceSpacesWithTies #1
  {
    \tl_set:Nn \l_tmpa_tl { #1 }
    \tl_replace_all:Nnn \l_tmpa_tl { ~ } { \nobreakspace }
    \tl_use:N \l_tmpa_tl
  }
\ExplSyntaxOff

\DeclareFieldFormat{postnote}{\ReplaceSpacesWithTies{#1}}
\DeclareFieldFormat{multipostnote}{\ReplaceSpacesWithTies{#1}}

\bibliography{references.bib}

\usepackage{tikz}
\usetikzlibrary{cd,bbox,calc,intersections}
\usetikzlibrary{decorations.markings}
\tikzset{->-/.style={postaction={decorate}, decoration={
  markings,
  mark=at position #1 with {\arrow{>}}}}}
\tikzset{-<-/.style={postaction={decorate}, decoration={
  markings,
  mark=at position #1 with {\arrow{<}}}}}
\usepackage[justification=centerlast,margin=20pt]{caption}

\definecolor{color1}{rgb}{0.4, 0.8, 0.1}
\definecolor{color2}{rgb}{0.3, 0.5, 0.8}
\definecolor{color3}{RGB}{255, 166, 0}
\definecolor{color4}{rgb}{0.8, 0, 0}
\definecolor{color5}{rgb}{0.3, 0.9, 0.8}

\tikzset{dot/.pic={\filldraw circle[radius=0.5mm];}}

\definecolor{forlinks}{rgb}{0.55, 0.0, 0.0}
\usepackage[pageanchor,colorlinks=true,allcolors=forlinks]{hyperref}
\usepackage[capitalize,nameinlink,noabbrev]{cleveref}

\usepackage{enumitem}
\setlist{nosep,font=\normalfont}
\setlist[enumerate,1]{label=\normalfont{(\arabic*)}} 
\setlist[itemize,1]{label=\normalfont\bfseries\textendash}   

\newtheorem{thm}{Theorem}[section]

\newtheorem{cor}[thm]{Corollary}
\newtheorem{lem}[thm]{Lemma}
\newtheorem{prop}[thm]{Proposition}

\theoremstyle{definition}

\newtheorem{manualinner}{} 
\newenvironment{manual}[1]{%
  \renewcommand\themanualinner{#1}%
  \manualinner
}{\endmanualinner}

\newcommand{\ZZ}{\mathbb{Z}}
\newcommand{\RR}{\mathbb{R}}
\newcommand{\QQ}{\mathbb{Q}}
\newcommand{\OD}{\mathring{\mathbb{D}}}
\newcommand{\DD}{\mathbb{D}}

\newcommand{\h}{\operatorname{H}}

\newcommand{\rev}{^\mathrm{rev}}

\renewcommand{\emph}[1]{\textit{\textbf{#1}}}

\title{\textbf{Cobordism-equivalence for\\codimension-one submanifolds}}
\author{Stefan Friedl$^1$ \and Tobias Hirsch$^1$ \and Clayton McDonald$^2$ \and José Pedro Quintanilha$^3$\and Daniel Zach$^1$}
\date{$^1$Universität Regensburg \qquad $^2$Renyi Institute \qquad $^3$Universität Heidelberg\\\bigskip\today
\footnotemark[0]
\footnotetext[0]{2020 \textit{Mathematics Subject Classification}: 57K10; 57R65, 57R90}}

\begin{document}

\maketitle

\begin{abstract}
    We show that two hypersurfaces in a manifold are related by a sequence of embedded cobordisms if and only if they represent the same homology class. By applying handle decompositions we turn these cobordisms into a sequence of embedded surgeries. Specializing to Seifert surfaces we obtain a conceptual proof that two Seifert surfaces of a fixed link are related by tube attachments and tube removals.
\end{abstract}

\section{Introduction}






It is a well-known result in knot theory that two Seifert surfaces of a given link are related by a sequence of tube attachments and tube removals. This is a key input for showing that two Seifert matrices of the same link are S-equivalent. There are many different types of proofs in the literature,
see e.g.\
\cite[Theorem 11]{GLi1978},  
\cite[p.197]{Kau1987a},  
\cite[Theorem 5.2.3]{Kaw1996},  
\cite[Theorem 8.2]{Lic1997a},   
\cite[Proposition 4.1.5, Theorem 5.1.11]{Kai1997}, 
\cite{BFK},
\cite[Theorem 5.5.1]{Crom2004},  
\cite[Theorem 5.4.1]{Muras2008}, and   
\cite[Proposition 4.11]{Juh2023}. 
Many of these proofs are ad-hoc, relying heavily on tools specific to knot theory such as the Reidemeister moves theorem and Seifert's algorithm. Some might also be better described as sketches with weaknesses especially in the case of links. We offer an alternative, more conceptual, approach by proving a general statement about manifolds in all dimensions which, with a bit of extra care, gives the knot-theoretic claim when restricting to link exteriors. As a consequence, our proof also applies to Seifert surfaces in rational homology spheres.

We proceed in two steps. The first is our main result, which asserts that homology is the only obstruction to two proper codimension-1 submanifolds being related by a sequence of embedded cobordisms:

\begin{manual}{\cref{thm:main}}
   Let $A$, $B$ be oriented proper codimension-1 smooth submanifolds of an oriented $n$-dimensional smooth manifold $M$. Then $A$ and $B$ represent the same class in $\h_{n-1}(M,\partial M)$ if and only if there exists a sequence \[A=S_0,S_1,\dots,S_k=B\] of  oriented proper codimension-1 smooth submanifolds of $M$ such that there is an embedded cobordism in $M$ from $S_{i-1}$ to $S_i$.
\end{manual}
It is not hard to see that the homology class obstructs such a sequence, see \cref{lem:easymain}. For the reverse direction there are two potential obstacles. Firstly, if $A$ and $B$ are not disjoint, it is not sensible to speak of an embedded cobordism between them. We overcome this with previous work by Herrmann and the fourth author \cite[Theorem 1.1]{HQ25}, which we recall with some modifications in \cref{thm:HQ}. Afterwards, we may assume $A\cap B=\emptyset$, but have lost count of the number of components of $A$ and $B$. The second hurdle therefore is that, unless $A$ and $B$ are connected, no union of components of $M\setminus(A\cup B)$ needs to define the desired cobordism from~$A$ to~$B$.
\begin{figure}[H]
    \centering
    \begin{tikzpicture}
        \draw[thick] (0,0) circle(1.3);
        \draw[color1,ultra thick] (0,0) +(90:1.3) node[below]{$A$} pic{dot} +(60:1.3) pic{dot} +(120:1.3) pic{dot} ;
        \draw[color2,ultra thick,yscale=-1] (0,0) +(90:1.3) node[above]{$B$} pic{dot} +(60:1.3) pic{dot} +(120:1.3) pic{dot} ;
    \end{tikzpicture}\qquad\qquad
    \begin{tikzpicture}
    \fill[black!05,rounded corners] (-1.5,-1.4) rectangle (1.5,1.4);
    \foreach \y in {-1,0,1} {
        \draw[ultra thick, color1] (0,\y) -- +(1.5,0); 
        \draw[ultra thick, color2] (0,\y) -- +(-1.5,0); 
        \draw (0,\y) pic[ultra thick]{dot};
        \draw[color1,->] (1.3,\y) -- +(0,-0.25);
        \draw[color2,->] (-1.3,\y) -- +(0,0.25);
    }
    \path (1.3,0) node[above,color1]{$A$} (-1.3,0) node[below,color2]{$B$};
    \end{tikzpicture}
    \caption{Left: Two collections $A$, $B$ of three points on a circle, all with the same orientation. Their complement has 6 components, no combination of which forms an embedded cobordism from $A$ to $B$.\\    
    Right: The same idea realized knot-theoretically. We see a cross-section of three parallel copies of a knot $K$ protruding from the drawing layer with two surfaces $A$, $B$. Each surface is formed by three parallel copies of a Seifert surface of $K$, all six of these are pairwise disjoint. Their orientations are indicated by normal vectors.\\    
    The complement of $A\cup B$ has four components, no combination of which forms an embedded cobordism from $A$ to $B$.} 
    \label{fig:exampleknot}
\end{figure}
This is overcome by a graph-theoretic argument which shows that there exists a sequence of cobordisms, each of which is a component of $M\setminus(A\cup B)$, going from $A$ to $B$. 

As a second step we apply the theory of handle decompositions for cobordisms to the cobordisms from the main theorem. We recall it in \cref{sec:handles}. This yields a sequence of embedded surgeries from $A$ to $B$. Specializing to the \mbox{3-dimensional} case we produce a sequence of tube attachments and tube removals, thus obtaining the motivating knot-theoretic result as a corollary. Here a \emph{Seifert surface} for an oriented link $L$ in a rational homology sphere $X$ is a connected compact oriented 2-dimensional smooth submanifold $\Sigma\subseteq X$ with $\partial\Sigma=L$. 
\begin{manual}{\cref{cor:seifert_tube}}
    Let $A$, $B$ be Seifert surfaces for a link $L$ in a rational homology sphere $X$. Then $B$~arises from~$A$ by a sequence of smooth isotopies, tube at\-tach\-ments and tube removals with the tube attachments occurring before the tube removals. In particular, all intermediate surfaces are also Seifert surfaces.
\end{manual}
\paragraph{Conventions.} All manifolds are compact oriented and smooth, possibly with boundary. If a manifold has corners, it will be explicitly called a manifold with corner.

\paragraph{Acknowledgments.} Initial discussion for this article took place during the workshop on \emph{Surfaces in 4-manifolds}, hosted at Universität Regensburg in December 2025, supported by the SFB 1085 Higher Invariants (Universität Regensburg, funded by the DFG, ID 224262486) and organized by Marc Kegel, Mark Powell, Arunima Ray and the first author.
\section{Preliminaries}
\subsection{Bipartitions and pushing sinks}\label{sec:graphs}
Before we come to topology, we establish two lemmas from graph theory. We will not pick a precise model for the definition of \enquote{graph}, \enquote{cycle} in a graph, \enquote{orientation} of a graph, etc. We are sure the reader can make the required notions precise, for example by following \cite[Section 2.1]{serre}. We allow graphs to have parallel edges and loops, i.e.\ edges between a vertex and itself. A graph shall always mean an unoriented object. If we need an orientation, this will be specified.

A finite graph $\Gamma$ is \emph{bipartite} if its set of vertices is a disjoint union of two subsets $L,R$ such that every edge lies between a vertex in $L$ and one in $R$. Our first lemma is a well-known criterion for a graph to be bipartite.
\begin{lem}[Criterion for bipartitions]\label{lem:crit_bip}
    A finite graph $\Gamma$ is bipartite if and only if all its cycles have even length. 
\end{lem}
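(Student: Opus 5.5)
We prove the two implications separately; only the converse needs real work. Throughout, a cycle of length $\ell$ is a closed walk $v_0,e_1,v_1,\dots,e_\ell,v_\ell=v_0$ with distinct edges and distinct vertices $v_0,\dots,v_{\ell-1}$. A loop is a cycle of length $1$, and two parallel edges form a cycle of length $2$.

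\textbf{Bipartite implies even cycles.} Suppose $\Gamma$ is bipartite with vertex set $L\sqcup R$. Along any closed walk, every edge joins $L$ to $R$, so the side of the current vertex switches at each step. Returning to the starting vertex therefore takes an even number of steps. In particular every cycle has even length, and $\Gamma$ has no loops.

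\textbf{Even cycles implies bipartite.} Suppose all cycles of $\Gamma$ have even length. We may treat each connected component separately, since a disjoint union of bipartite graphs is bipartite. So assume $\Gamma$ is connected and fix a base vertex $v_0$. Choose a spanning tree $T$, which exists because $\Gamma$ is finite and connected. Let $d(v)$ be the length of the unique reduced path in $T$ from $v_0$ to $v$. Set
\[
L=\{v : d(v)\text{ even}\},\qquad R=\{v : d(v)\text{ odd}\}.
\]

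We must show that every edge $e$ joins $L$ to $R$.
\begin{itemize}
\item If $e$ lies in $T$, then $d$ changes by exactly one across $e$, so $e$ joins the two sides.
\item If $e$ is a loop, it is a cycle of length $1$, which is odd. This contradicts the hypothesis, so loops do not occur.
\item If $e\notin T$ joins distinct vertices $u$ and $w$, let $P$ be the unique reduced tree path from $u$ to $w$. Then $P$ followed by $e$ is a cycle of length $|P|+1$, because a reduced path in a tree repeats no vertices and $e$ is not among its edges.
\end{itemize}

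In the last case, let $x$ be the vertex where the tree paths from $v_0$ to $u$ and from $v_0$ to $w$ diverge. Then $|P|=d(u)+d(w)-2d(x)$, so $|P|\equiv d(u)+d(w)\pmod 2$. The cycle $P\cup e$ has even length by hypothesis, so $|P|$ is odd. Hence $d(u)$ and $d(w)$ have opposite parities, and $e$ joins $L$ to $R$.

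The only delicate step is verifying that $P\cup e$ really is a cycle and that its length has the stated parity. Both reduce to the formula for $|P|$ via the divergence vertex $x$. Loops and parallel edges need no special treatment: they are covered by the loop case above and by the non-tree-edge case, respectively.
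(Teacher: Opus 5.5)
Your proof is correct and takes essentially the same route as the paper, whose forward direction is the same alternation argument and whose converse likewise fixes a vertex in each component and propagates the bipartition to neighbouring vertices. Your spanning tree $T$, with parities of tree distance, is a precise form of that propagation. Checking each non-tree edge against its fundamental cycle supplies the well-definedness argument that the paper leaves implicit.
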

\begin{proof}
    If $\Gamma$ is bipartite, the cycles have even length as their vertices must alternate between $L$ and $R$.

    If all cycles have even length, we can define a bipartition by arbitrarily assigning one vertex in every component to $L$ and then successively propagating to neighboring vertices.
\end{proof}

Now consider a finite graph $\Gamma$ with an orientation $O$. A \emph{sink} of $O$ is a vertex with only incoming edges, a \emph{source} is a vertex with only outgoing edges. An orientation $O'$ of $\Gamma$ \emph{arises from $O$ by pushing up a sink} if $O$ and $O'$ differ precisely on the edges surrounding a sink of~$O$. In other words, a sink of $O$ has become a source in $O'$. The next lemma by Pretzel \cite{Pretzel} describes which orientations can be reversed by pushing up sinks.

\begin{lem}[Reversing by pushing up sinks]\label{lem:sinkpushup}
    Let $\Gamma$ be a finite graph with an orientation $O$. The following are equivalent:
    \begin{enumerate}
        \item For every cycle $\gamma$ in $\Gamma$ the number of edges $\gamma$ traverses in the direction of~$O$ equals the number of edges $\gamma$~traverses against it.
        \item There exists a sequence of orientations $O=O_0,O_1,\dots,O_k=O\rev$ such that $O_i$~arises from~$O_{i-1}$ by pushing up a sink.
    \end{enumerate}
\end{lem}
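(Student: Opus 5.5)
The plan is to prove both implications by tracking how often each vertex is pushed, using the flow-difference (``potential'') viewpoint.

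\textbf{(2)$\Rightarrow$(1).} Suppose we have a sequence of sink push-ups reversing $O$, and let $n(v)$ be the number of times vertex $v$ is pushed. An edge $e=uv$ changes orientation exactly when one of its endpoints is pushed. Moreover, pushes of $u$ and $v$ must alternate along the sequence: after $u$ is pushed, $e$ points away from $u$, so $u$ cannot be pushed again until $v$ has been. Hence $|n(u)-n(v)|\le 1$, and since $e$ ends up reversed, $n(u)+n(v)$ is odd. Together these force $n(u)-n(v)=\pm1$.

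Next we determine the sign. If $e$ points from $u$ to $v$ in $O$, then $v$, the head, is the only endpoint that can be pushed first. So the alternation starts with $v$, and $n(v)=n(u)+1$. (A loop can never be at a sink, so no push-up can reverse it; hence in case (2) there are no loops.) Thus $n$ is a potential with $n(\mathrm{head})-n(\mathrm{tail})=1$ on every edge. Summing $n(\mathrm{head})-n(\mathrm{tail})$ around any cycle telescopes to $0$, so the forward and backward edge counts agree, giving (1).

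\textbf{(1)$\Rightarrow$(2).} Condition (1) says the cochain assigning $+1$ to each edge in its $O$-direction has zero sum around every cycle. So we can integrate it on each component to a function $h$ with $h(\mathrm{head})=h(\mathrm{tail})+1$. Loops are excluded, since a loop is a cycle of length $1$ with unequal counts. We normalise $h$ so that its minimum on each component is $0$ and write $H=\max h$.

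We then argue by induction, maintaining the invariant that the current orientation is given by the potential $h'$: every edge points toward the larger value of $h'$, and adjacent values differ by exactly $1$. A vertex with maximal $h'$ is a sink, because all its neighbours have value exactly one less. Pushing it up is equivalent to replacing its value by $h'-2$, and the invariant persists. Concretely, we repeatedly push up a vertex of maximal current value. Each push lowers the maximum level set step by step, so the process is like rotating the levels. We stop once every vertex $v$ has been pushed exactly $h(v)+1$ times relative to the baseline, or more simply once every vertex has had its value decreased by an odd amount that makes all edge differences flip sign. A clean choice is the final potential $h^*=-h$: we need to lower $v$ by $2h(v)$. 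To make this consistent we instead push each vertex $c-h(v)$ times for a suitable constant $c$. An edge from $u$ to $v$ with $h(v)=h(u)+1$ has the difference of push counts equal to $1$, so it reverses, as required.

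The main obstacle is checking that a valid ordering of these pushes exists. I would schedule them greedily, always pushing some vertex of maximal current potential that still has pushes remaining. The potential-difference invariant then guarantees that this vertex is a sink at the moment it is pushed.
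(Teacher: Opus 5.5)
Your argument for (2)$\Rightarrow$(1) is correct. Extracting from the alternation that the push counts satisfy $n(\mathrm{head})=n(\mathrm{tail})+1$ is a different and more informative route than the paper's, which simply notes that the signed edge count along a cycle is unchanged by a sink push and so must equal its own negative. For (1)$\Rightarrow$(2), however, there is a genuine gap, and it sits exactly at the step you yourself call the main obstacle.

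First, the push counts you finally settle on, $c-h(v)$, have the wrong sign. They require the tail of every edge to be pushed once more than its head, which your own (2)$\Rightarrow$(1) analysis rules out. For a single edge $u\to v$ with $h(u)=0$, $h(v)=1$, you would have to push the source $u$ more often than the sink $v$. The target $h^*=-h$ forces $n(v)=h(v)$, or more generally $h(v)+c$. Also, values always drop by even amounts, not odd ones.

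Second, and more seriously, you assert without justification that the potential-difference invariant makes your greedy choice a sink. That invariant only tells you that local maxima of $h'$ are sinks. A vertex that is maximal merely among those with pushes remaining need not be a local maximum a priori: it could sit directly below a neighbor that is already finished. Nor can you fall back on always pushing a global maximum, because that process overshoots. On the path $a\to b\to c\to d$ it eventually alternates forever between two orientations and never reaches $O\rev$. So the bookkeeping is essential, and you must prove it is compatible with the sink condition.

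The missing idea is a comparison with the target potential; this is precisely the paper's key step, mirrored to your convention. Maintain $h'\ge h^*$ and $h'\equiv h^*\pmod 2$, so that ``pushes remaining'' means $h'(v)\neq h^*(v)$. Suppose an unfinished $s$ of maximal $h'$ had a current edge $s\to w$. Then $h'(w)=h'(s)+1$, so $w$ is finished and $h'(w)=h^*(w)$. If this edge was $s\to w$ in $O$, you get $h'(s)=h^*(s)-2$, contradicting $h'\ge h^*$. If it was $w\to s$ in $O$, you get $h'(s)=h^*(s)$, contradicting that $s$ is unfinished. Add the fact that $\sum_v\bigl(h'(v)-h^*(v)\bigr)$ is a non-negative even integer that drops by $2$ with each push. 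Together these show the greedy schedule never gets stuck and terminates at $O\rev$. Without this comparison argument, your proposal does not establish that the required ordering of pushes exists.
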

\begin{proof} 
    It suffices to prove the lemma when $\Gamma$ is connected.
    
    Suppose (1) holds. Then $O$ allows a \emph{height} map $h\colon V(\Gamma)\to\ZZ$, i.e.\ a map on the vertices of~$\Gamma$ such that $h(v')=h(v)-1$ if there is an oriented edge from $v$ to~$v'$. 
    We can define this map by arbitrarily assigning the value of one vertex and then successively propagating to neighboring vertices. Condition (1) precisely ensures that this is well-defined. We may assume after an appropriate shift that all heights are at most 0 and that there exists a vertex of height 0. Observe the following:
    \begin{itemize}
        \item A height function for the orientation obtained by pushing up a sink results from adding 2 to the height of the sink.   
        \item A height function determines the orientation it arises from. In particular, $h\rev\coloneqq -h$ arises from $O\rev$.
    \end{itemize}
    With this in mind, we describe a procedure for obtaining the sequence in (2):

    Start in step $i\coloneqq 0$ with $h_0\coloneq h$ and $O_0\coloneq O$. 
    Consider the set \[S_i\coloneq \{v\in V\mid h_i(v)\neq h\rev(v)\}.\] If this set is empty, stop. Otherwise take a vertex $s$ of minimal height in $S_i$ and push it up, thereby obtaining a new height $h_{i+1}$ and a new orientation $O_{i+1}$. 
    \begin{figure}[H]
        \centering
                \begin{tikzpicture}
            \draw[thick,-<-=0.5] (0,-3) to[bend left=30] (0,-2);
            \draw[thick,-<-=0.5] (0,-3) to[bend right=30] (0,-2);
            \draw[thick,-<-=0.5] (0,-2) to[bend left=30] (0,-1);
            \draw[thick,-<-=0.5] (0,-2) to[bend right=30] (0,-1);
            \draw[thick,-<-=0.5] (0,-1) to[bend left=30] (0,0);
            \draw[thick,-<-=0.5] (0,-1) to[bend right=30] (0,0);
            
            \draw (0,-3) pic[ultra thick,color1]{dot} node[left]{$-3$};
            \draw (0,-2) pic[ultra thick]{dot} node[left]{$-2$};
            \draw (0,-1) pic[ultra thick]{dot} node[left]{$-1$};
            \draw (0,0) pic[ultra thick]{dot} node[left]{$0$};
        
            \tikzset{shift={(1.5,0)}}
            \draw[thick,->-=0.5] (0,-3) to[bend left=30] (0,-2);
            \draw[thick,->-=0.5] (0,-3) to[bend right=30] (0,-2);
            \draw[thick,-<-=0.5] (0,-2) to[bend left=30] (0,-1);
            \draw[thick,-<-=0.5] (0,-2) to[bend right=30] (0,-1);
            \draw[thick,-<-=0.5] (0,-1) to[bend left=30] (0,0);
            \draw[thick,-<-=0.5] (0,-1) to[bend right=30] (0,0);
            
            \draw (0,-3) pic[ultra thick]{dot} node[left]{$-1$};
            \draw (0,-2) pic[ultra thick,color1]{dot} node[left]{$-2$};
            \draw (0,-1) pic[ultra thick]{dot} node[left]{$-1$};
            \draw (0,0) pic[ultra thick]{dot} node[left]{$0$};
        
            \tikzset{shift={(1.5,0)}}
            \draw[thick,-<-=0.5] (0,-3) to[bend left=30] (0,-2);
            \draw[thick,-<-=0.5] (0,-3) to[bend right=30] (0,-2);
            \draw[thick,->-=0.5] (0,-2) to[bend left=30] (0,-1);
            \draw[thick,->-=0.5] (0,-2) to[bend right=30] (0,-1);
            \draw[thick,-<-=0.5] (0,-1) to[bend left=30] (0,0);
            \draw[thick,-<-=0.5] (0,-1) to[bend right=30] (0,0);
            
            \draw (0,-3) pic[ultra thick]{dot} node[left]{$-1$};
            \draw (0,-2) pic[ultra thick]{dot} node[left]{$0$};
            \draw (0,-1) pic[ultra thick,color1]{dot} node[left]{$-1$};
            \draw (0,0) pic[ultra thick]{dot} node[left]{$0$};
        
            \tikzset{shift={(1.5,0)}}
            \draw[thick,-<-=0.5] (0,-3) to[bend left=30] (0,-2);
            \draw[thick,-<-=0.5] (0,-3) to[bend right=30] (0,-2);
            \draw[thick,-<-=0.5] (0,-2) to[bend left=30] (0,-1);
            \draw[thick,-<-=0.5] (0,-2) to[bend right=30] (0,-1);
            \draw[thick,->-=0.5] (0,-1) to[bend left=30] (0,0);
            \draw[thick,->-=0.5] (0,-1) to[bend right=30] (0,0);
            
            \draw (0,-3) pic[ultra thick,color1]{dot} node[left]{$-1$};
            \draw (0,-2) pic[ultra thick]{dot} node[left]{$0$};
            \draw (0,-1) pic[ultra thick]{dot} node[left]{$1$};
            \draw (0,0) pic[ultra thick]{dot} node[left]{$0$};
        
            \tikzset{shift={(1.5,0)}}
            \draw[thick,->-=0.5] (0,-3) to[bend left=30] (0,-2);
            \draw[thick,->-=0.5] (0,-3) to[bend right=30] (0,-2);
            \draw[thick,-<-=0.5] (0,-2) to[bend left=30] (0,-1);
            \draw[thick,-<-=0.5] (0,-2) to[bend right=30] (0,-1);
            \draw[thick,->-=0.5] (0,-1) to[bend left=30] (0,0);
            \draw[thick,->-=0.5] (0,-1) to[bend right=30] (0,0);
            
            \draw (0,-3) pic[ultra thick]{dot} node[left]{$1$};
            \draw (0,-2) pic[ultra thick,color1]{dot} node[left]{$0$};
            \draw (0,-1) pic[ultra thick]{dot} node[left]{$1$};
            \draw (0,0) pic[ultra thick]{dot} node[left]{$0$};
        
            \tikzset{shift={(1.5,0)}}
            \draw[thick,-<-=0.5] (0,-3) to[bend left=30] (0,-2);
            \draw[thick,-<-=0.5] (0,-3) to[bend right=30] (0,-2);
            \draw[thick,->-=0.5] (0,-2) to[bend left=30] (0,-1);
            \draw[thick,->-=0.5] (0,-2) to[bend right=30] (0,-1);
            \draw[thick,->-=0.5] (0,-1) to[bend left=30] (0,0);
            \draw[thick,->-=0.5] (0,-1) to[bend right=30] (0,0);
            
            \draw (0,-3) pic[ultra thick,color1]{dot} node[left]{$1$};
            \draw (0,-2) pic[ultra thick]{dot} node[left]{$2$};
            \draw (0,-1) pic[ultra thick]{dot} node[left]{$1$};
            \draw (0,0) pic[ultra thick]{dot} node[left]{$0$};
        
            \tikzset{shift={(1.5,0)}}
            \draw[thick,->-=0.5] (0,-3) to[bend left=30] (0,-2);
            \draw[thick,->-=0.5] (0,-3) to[bend right=30] (0,-2);
            \draw[thick,->-=0.5] (0,-2) to[bend left=30] (0,-1);
            \draw[thick,->-=0.5] (0,-2) to[bend right=30] (0,-1);
            \draw[thick,->-=0.5] (0,-1) to[bend left=30] (0,0);
            \draw[thick,->-=0.5] (0,-1) to[bend right=30] (0,0);
            
            \draw (0,-3) pic[ultra thick]{dot} node[left]{$3$};
            \draw (0,-2) pic[ultra thick]{dot} node[left]{$2$};
            \draw (0,-1) pic[ultra thick]{dot} node[left]{$1$};
            \draw (0,0) pic[ultra thick]{dot} node[left]{$0$};        
        \end{tikzpicture}
        \caption{The sequence of orientations when applying this procedure to a doubled line.}
        \label{fig:examplegraph}
    \end{figure}
    By induction $h_{i}(v)\leq h\rev(v)$ for all $i\geq0$ and vertices $v$. It remains to observe the following:
    \begin{itemize}
        \item For $i\geq0$, a vertex $s\in S_i$ of minimal height is a sink in $O_i$:
        
        Suppose it is not. Then there exists an edge $e$ from $s$ to some vertex $v$. By minimality, $v\notin S_i$, hence, $h_i(v)=h\rev(v)$ and $h_i(s)\in\{h\rev(s),h\rev(s)+2\}$ depending on the orientation of $e$ in the original orientation $O$. The first one is impossible as $s\in S$, the second as $h_i(s)\leq h\rev (s)$. 
        \item The procedure stops since in every step $i\geq0$ the non-negative integer
        \[
        \sum_{v\in V} h\rev(v)-h_i(v)
        \]
        decreases by 2. If it stops in step $i$, we have $S_i=\emptyset$ which implies  $O_i=O\rev$. 
    \end{itemize}
    
    For the reverse implication note that for every cycle $\gamma$ the difference between the number of edges $\gamma$~traverses in the direction of $O$ and the number of edges $\gamma$ traverses against it remains unchanged when pushing up a sink. Hence, if such a sequence exists, this quantity must be the same in $O$ and $O\rev$, implying that it is zero.
\end{proof}
\subsection{Corners and cobordisms}\label{sec:handles}
We will need cobordisms between manifolds with boundary, which are manifolds with corner. We recall this terminology following \cite[pp.30,130]{Wall}:

A \emph{chart} for a point $P$ in a topological space $W$ is a homeomorphism $\Phi\colon U\to V$ with $\Phi(P)=0$ where $U$ is an open neighborhood of $P$ and $V$ is an open neighborhood of $\Phi(P)=0$ in $\RR^n$, $\RR_{\geq0}\times\RR^{n-1}$ or $\RR_{\geq0}^2\times\RR^{n-2}$. An \emph{$n$-dimensional smooth manifold with corner} is a second-countable Hausdorff space $W$ together with a collection of smoothly compatible charts containing a chart for every $P\in W$. The \emph{boundary} $\partial M$ of $M$ consists of all points with a chart in $\RR_{\geq0}\times\RR^{n-1}$ or $\RR_{\geq0}^2\times\RR^{n-2}$, the \emph{corner} $\angle W$ of all points with a chart in $\RR_{\geq0}^2\times\RR^{n-2}$.

Let $W$ be an oriented $n$-dimensional smooth manifold with corner. A closed subset $A\subseteq\partial W$ is a \emph{smooth boundary of $W$} if for every $P\in A$ there exists a chart $\Phi\colon U\to V$ with $\Phi(U\cap A)=V\cap(\{0\}\times\RR^{n-1})$. Then $A$ is naturally an oriented smooth manifold with boundary $\partial A=A\cap\angle W$. 

Two $(n-1)$-dimensional oriented manifolds with boundary $A$, $B$ are \emph{abstractly cobordant} if there exist an $n$-dimensional manifold with corner $W$, smooth boundaries $\tilde A,\tilde B,V\subseteq\partial W$ and diffeomorphisms $\alpha\colon A\to\tilde A$, $\beta\colon B\to\tilde B$ such that
\begin{itemize}
    \item $\tilde A$ and $\tilde B$ are disjoint, 
    \item $\partial V=\partial(\tilde A\cup\tilde B)=\angle W$, and
    \item $\partial W=\tilde B\cup \tilde A\rev\cup V$ as an oriented manifold.
\end{itemize}
In this case, $W$ is an \emph{abstract cobordism} from $A$ to $B$ with \emph{vertical part} $V$.

If the vertical boundary is a product, i.e.\ if there exists a diffeomorphism $\phi\colon \partial A\times[0,1]\to V$ restricting to $\alpha$ on $\partial A\times\{0\}$, one may develop handle decompositions as usual. Indeed, a \emph{$k$-handle attachment} is then given by 
\[
W\cup_f h^k\coloneqq W\cup_f\!\left(\DD^k\times\DD^{n-k}\right)
\]
where $f\colon\partial\DD^k\times\DD^{n-k}\to \tilde B\setminus\partial\tilde B$ is a smooth embedding. We view $W\cup_f h^k$ again as a cobordism starting at $A$ with vertical part $V\cong\partial A\times[0,1]$, on which we can iterate this process. This leads to \emph{handle decomposition}, i.e.\ a diffeomorphism 
\[
\Phi\colon(A\times[0,1])\cup h^{i_1}\cup\dots\cup h^{i_k}\to W
\]
that restricts to $\alpha$ on $A\times\{0\}$ and $\phi$ on $\partial A\times[0,1]$. One may use the usual Morse-theoretic approach to show the existence of handle decompositions for cobordisms, see \cite[Lemma 5.1.1, Corollary 5.1.7]{Wall}:
\begin{thm}[Existence of handle decompositions]\label{thm:handle_decomp}
Let $W$ be a compact $n$\nobreakdash-dimensional cobordism from $A$ to $B$ with vertical part $V$ such that there exists a diffeomorphism $\phi\colon\partial A\times[0,1]\to V$ which is the identity on $\partial A\times\{0\}$. Then there exists a handle decomposition 
\[
(A\times[0,1])\cup h^{i_1}\cup\dots\cup h^{i_k}\to W
\]
extending $\phi$.
\end{thm}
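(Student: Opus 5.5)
We reduce to the classical Morse-theoretic existence of handle decompositions for cobordisms between closed manifolds, or rather for manifolds with a product vertical boundary, by working relative to the collar over $\partial A$. First we choose a collar of $\tilde A$ in $W$, compatible near the corner with the product structure $\phi$ on $V$. This is possible because $V$ is already a product $\partial A\times[0,1]$ that starts at $\tilde A$. Concretely, we pick an inward-pointing vector field along $\tilde A$ that is tangent to $V$ near $\partial\tilde A$ and agrees there with $\partial_t$ from $\phi$. Integrating it gives an embedding $A\times[0,\varepsilon]\hookrightarrow W$ that restricts to $\alpha$ on $A\times\{0\}$ and to $\phi$ on $\partial A\times[0,\varepsilon]$.

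Next we construct a Morse function $F\colon W\to[0,1]$ with the following properties:
\begin{itemize}
\item $F^{-1}(0)=\tilde A$ and $F^{-1}(1)=\tilde B$;
\item $F\circ\phi(x,t)=t$ on $V$, so $F$ has no critical points near $V$ and its gradient-like vector field is tangent to $V$;
\item all critical points of $F$ lie in the interior of $W$, are nondegenerate, and have distinct values.
\end{itemize}
We start from a function defined near $\partial W$ by these prescriptions: the collar coordinate near $\tilde A$, a collar of $\tilde B$ near $\tilde B$, and $t$ on a neighbourhood of $V$ obtained from a collar of $V$. These must be glued compatibly at the corners, using the corner chart model $\RR_{\geq0}^2\times\RR^{n-2}$. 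We then extend the result to all of $W$ and perturb it to be Morse relative to the neighbourhood where it is already regular. This is the standard relative Morse approximation, as in \cite[Lemma 5.1.1]{Wall}.

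Given $F$, we choose a gradient-like vector field $X$ that is tangent to $V$, equals $\partial_t$ there, and equals the collar field near $\tilde A$. Let $c_1<\dots<c_k$ be the critical values. Between them, the flow of $X$ gives diffeomorphisms $F^{-1}[0,c]\cong F^{-1}[0,c']$. These diffeomorphisms preserve the vertical part and its product structure, because $X$ is tangent to $V$ and agrees with $\partial_t$. Passing a critical point of index $i_j$ attaches a handle $h^{i_j}$ along an embedding into the interior of the current level set $F^{-1}(c)$, which plays the role of $\tilde B$. This embedding avoids the boundary because the critical point and its descending disk stay away from $V$: the flow near $V$ is vertical and has no critical points. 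Composing these identifications inductively, starting from the collar $A\times[0,\varepsilon]\cong A\times[0,1]$, yields
\[
\Phi\colon(A\times[0,1])\cup h^{i_1}\cup\dots\cup h^{i_k}\to W
\]
restricting to $\alpha$ and extending $\phi$.

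The main obstacle is the corner bookkeeping in the second step: producing $F$ near the corners $\partial\tilde A$ and $\partial\tilde B$ so that it is simultaneously the collar coordinate of $\tilde A$ (respectively $\tilde B$) and the coordinate $t$ on $V$, without introducing critical points at the corner. We would first straighten the corners using local charts in $\RR_{\geq0}^2\times\RR^{n-2}$, where $V$ corresponds to one face and $\tilde A$ to the other. Since $\tilde A$ and $\tilde B$ are disjoint, only corners of this type occur. In such a chart we take $F$ to be the coordinate normal to the $\tilde A$-face. We then patch the charts with a partition of unity, using the fact that convex combinations of functions whose differentials are positive on a fixed transverse vector field remain regular.
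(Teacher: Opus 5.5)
Your proposal is correct and follows the same route as the paper. The paper does not prove this statement itself; it invokes the usual Morse-theoretic argument from \cite[Lemma 5.1.1, Corollary 5.1.7]{Wall}, which is what you describe: a Morse function equal to $t$ on the product vertical part, a gradient-like field tangent to $V$, and each critical point giving a handle attached in the interior of the current upper boundary.

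One correction to your last paragraph. A partition-of-unity combination $\sum_i\rho_i f_i$ of regular functions need not be regular, because of the extra term $\sum_i f_i\,d\rho_i$. Your combination is regular at points of $\partial W$ only because all your local functions agree there (they equal $0$, $1$ or $t$), so that term vanishes by $\sum_i d\rho_i=0$. Regularity then persists on a neighbourhood of $\partial W$, which is all you need.
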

We will also want to consider embedded cobordisms between submanifolds of a manifold with boundary $M$. This leads to the notion of submanifolds with corner (compare \cite[p.31]{Wall}). A subset $W\subseteq M$ is a \emph{$k$-dimensional submanifold with corner} if each point $P\in W$ has a neighborhood $U\subseteq M$ with a smooth embedding $\phi\colon U\to\RR^n$ such that $\varphi(P)=0$ and one of the following holds:
\begin{enumerate}
    \item $\varphi(U)\subseteq\RR^n$ is open and $\varphi(U\cap W)=\varphi(U)\cap(\RR^k\times\{0\})$,
    \item $\varphi(U)\subseteq\RR_{\geq0}\times\RR^{n-1}$ is open and $\varphi(U\cap W)=\varphi(U)\cap(\RR^k\times\{0\})$,
    \item $\varphi(U)\subseteq\RR^n$ is open and $\varphi(U\cap W)=\varphi(U)\cap(\RR_{\geq0}\times\RR^{k-1}\times\{0\})$,
    \item $\varphi(U)\subseteq\RR_{\geq0}\times\RR^{n-1}$ is open and $\varphi(U\cap W)=\varphi(U)\cap(\RR_{\geq0}\times\RR^{k-1}\times\{0\})$.
\end{enumerate}
Then $W$ is naturally a $k$-dimensional smooth manifold with corner. We can decompose $\partial W$, which consists of the points having a chart of type (2)--(4), into
\begin{itemize}
    \item $\partial_v W$, given by the closure of the points having a chart of type (2) in $W$ or equivalently $W\cap\partial M$, and
    \item $\partial_\circ W$, given by the closure of the points having a chart of type (3) in $W$ or equivalently the closure of $\partial W\cap\mathring M$ in $W$. 
\end{itemize}
These sets intersect in the corner $\angle W$ consisting of the points with a chart of type (4). Also note that $\partial_v W$ and $\partial_\circ W$ are smooth boundaries of $W$.
\begin{figure}[H]
    \centering
    \begin{tikzpicture}
        \path[use as bounding box] (-2,-2) rectangle (2,2);
        
        \begin{scope}
        \clip[rounded corners] (-2,-2) rectangle (2,2);
        \fill[black!05] (-2,2) .. controls +(0.5,-1) and +(0.5,1) .. (-2,-2) --
        (2,-2) .. controls +(-0.5,1) and +(-0.5,-1) .. (2,2) -- cycle;
        \draw[thick,name path=lr] (-2,2) .. controls +(0.5,-1) and +(0.5,1) .. (-2,-2)
        (2,-2) .. controls +(-0.5,1) and +(-0.5,-1) .. (2,2);  
        \path[name path=tb,rotate=90,xscale=0.5] (-2,2) .. controls +(0.5,-1) and +(0.5,1) .. (-2,-2)
        (2,-2) .. controls +(-0.5,1) and +(-0.5,-1) .. (2,2);  
        \path[name intersections={of=tb and lr, name=c}];
        
        \path[name path=v] (0,-2) -- (0,2);
        \path[name path=h] (-2,0) -- (2,0);               
        \path[name intersections={of=tb and v, name=v}];
        \path[name intersections={of=lr and h, name=h}];

        \begin{scope}
            \clip (-2,2) .. controls +(0.5,-1) and +(0.5,1) .. (-2,-2) --
            (2,-2) .. controls +(-0.5,1) and +(-0.5,-1) .. (2,2) -- cycle; 
            \begin{scope}
                \clip[rotate=90,xscale=0.5] (-2,2) .. controls +(0.5,-1) and +(0.5,1) .. (-2,-2) node[pos=0.4] (A) {} -- (2,-2) .. controls +(-0.5,1) and +(-0.5,-1) .. (2,2) -- cycle ; 
                \fill[black!20] (-2,-2) rectangle (2,2);   
            \end{scope} 
            \fill[color4!50] (A) circle(0.35) (h-1) circle(0.3)  (c-1) circle(0.35) (1,-0.4) circle(0.35); 
        \end{scope}  
        \begin{scope}
            \clip (-2,2) .. controls +(0.5,-1) and +(0.5,1) .. (-2,-2) --
            (2,-2) .. controls +(-0.5,1) and +(-0.5,-1) .. (2,2) -- cycle;            
            \draw[ultra thick,rotate=90,xscale=0.5,color1] (-2,2) .. controls +(0.5,-1) and +(0.5,1) .. (-2,-2)
            (2,-2) .. controls +(-0.5,1) and +(-0.5,-1) .. (2,2);               
        \end{scope}
        \begin{scope}
            \clip[rotate=90,xscale=0.5] (-2,2) .. controls +(0.5,-1) and +(0.5,1) .. (-2,-2) --
            (2,-2) .. controls +(-0.5,1) and +(-0.5,-1) .. (2,2) -- cycle;           
            \draw[ultra thick,color2] (-2,2) .. controls +(0.5,-1) and +(0.5,1) .. (-2,-2)
            (2,-2) .. controls +(-0.5,1) and +(-0.5,-1) .. (2,2);               
        \end{scope}           
        
        \filldraw[fill=white,draw=black] (-0.6,0) .. controls +(0.25,0.25) and +(-0.25,0.25) .. (0.6,0) .. controls +(-0.25,-0.25) and +(0.25,-0.25) .. (-0.6,0);
        \draw (0.6,0) -- +(0.075,0.075) (-0.6,0) -- +(-0.075,0.075);

        \foreach \i in {1,...,4}{
            \draw[color3] (c-\i) pic[ultra thick]{dot};
        } 
       
        \end{scope}                

        \path (c-3) node[left,color3]{$\angle W$} (v-2) node[above,color1]{$\partial_\circ W$} (h-2) node[right,color2]{$\partial_vW$} (c-1) node[left,color4]{\small (4)} (A) ++(0,-0.35) node[below,color4]{\small (3)} (h-1) node[left,color4]{\small (2)} (1,-0.05) node[above,color4]{\small (1)};
    \end{tikzpicture}
\end{figure}

\section{Cobordisms of hypersurfaces}
We now turn to cobordisms of hypersurfaces. Let $M$ be a (compact oriented smooth) manifold of dimension $n$ with (possibly empty) boundary. 
A codimension-1 submanifold $S$ of $M$ is a \emph{hypersurface} if it is oriented and \emph{proper}, i.e.\ every point of $S$ has a chart of type (1) or (2). We do not require $M$ or $S$ to be connected. 
In this situation, we will denote by $[M]\in \h_n(M, \partial M)$ the fundamental class of $M$ (with integer coefficients) and, with slight abuse, write $[S] \in \h_{n-1}(M, \partial M)$ to denote the image of the fundamental class of $S$ under the inclusion-induced map $\h_{n-1}(S, \partial S) \to \h_{n-1}(M, \partial M)$. We will write~$S\rev$ to denote the hypersurface $S$ with reversed orientation.

Given two disjoint hypersurfaces $A, B$ in $M$, an \emph{(embedded) cobordism}~$W$ from $A$ to $B$ is a compact codimension-$0$ submanifold with corner of~$M$, such that
\[\partial_\circ W = B \cup A\rev.\]
with the orientation on $W$ induced by $M$. It is sometimes convenient to allow hypersurfaces with common components. Hence, if $A$, $B$ are hypersurfaces in $M$ such that $I\coloneq  A\cap B$ is a union of components of $A$ (and~$B$) with matching orientation, a cobordism from $A\setminus{}I$ to~$B\setminus{}I$ that does not intersect $I$ shall also be a \emph{cobordism} from $A$ to $B$. 

We emphasize the distinction between this definition, where both the cobordism and the hypersurfaces are understood as embedded objects in $M$, and the notion of abstract cobordism of manifolds discussed in the last section. As we will only consider embedded cobordisms from now on, we will drop the word \enquote{embedded}.

A cobordism must always be the closure of a union of components of $M\setminus(A\cup B)$. However, a component of $M\setminus(A\cup B)$ only defines a cobordism if its orientation aligns in the following sense. A component $v$ of $M\setminus (A\cup B)$ \emph{touches the positive side of $c$}, where $c$ is a union of components of $A\cup B$, if 
\[
v\cap\beta(c\times[-1,1])\subseteq\beta(c\times[0,1])
\]
for some \emph{bicollar} $\beta\colon c\times[-1,1]\to M$, i.e.\ an orientation-preserving smooth embedding that restricts to the identity on $c\times\{0\}$. Analogously, we define \emph{touching the negative side}. 
\begin{lem}[Cobordisms from components]\label{lem:comp_to_cob}
    Let $A$, $B$ be disjoint hypersurfaces in an $n$-dimensional manifold $M$ and $v$ a connected component of $M\setminus(A\cup B)$. The closure of $v$ in $M$ defines a cobordism from $A$ to $B$ if and only if $v$ touches the positive side of $A$ and the negative side of $B$.
\end{lem}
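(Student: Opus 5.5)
Set $W\coloneqq\overline v$ and $c$ a component of $A\cup B$. The plan is to show that $W$ is always a compact codimension-$0$ submanifold with corner of $M$, to identify $\partial_\circ W$ together with its boundary orientation from the local picture along $A\cup B$, and then to compare this with the required equality $\partial_\circ W=B\cup A\rev$.

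\textbf{Local structure.} Since $A\cup B$ is closed and $v$ is a component of its open complement, $v$ is open. Moreover $\overline v\setminus v\subseteq A\cup B$, and $W$ is compact because $M$ is. At points of $v$ we use ordinary charts of type (1) or (2) of $M$. Near a point $p\in c$ we fix a bicollar $\beta\colon c\times[-1,1]\to M$ and a chart of $c$ around $p$, which is of type (1) or (2) because $c$ is proper. After shrinking, we may assume $\beta$ avoids the other components of $A\cup B$. Then the two open half-collars $\beta(U\times(0,1])$ and $\beta(U\times[-1,0))$ over a connected chart domain $U\ni p$ are connected and disjoint from $A\cup B$. Hence each of them lies entirely in $v$ or entirely outside $v$. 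This gives three cases:
\begin{itemize}
\item $v$ meets neither side: then $p\notin W$.
\item $v$ meets both sides: then $W$ is a full neighborhood of $p$, so $p$ is an interior point (type (1)/(2)), and $p\notin\partial_\circ W$.
\item $v$ meets exactly one side: then $W$ is locally a half-space bounded by $c$, which gives a chart of type (3), or of type (4) when $p\in\partial M$.
\end{itemize}
By connectedness of $c$ the case does not depend on $p\in c$. So $W$ is a submanifold with corner, and $\partial_\circ W$ is the union of those components $c$ that $v$ meets on exactly one side.

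\textbf{Orientations.} We use the convention that the positive normal of $\beta$ followed by an oriented frame of $c$ is positively oriented in $M$, and that boundaries are oriented by the outward normal first. If $v$ lies on the positive side of $c$, the outward normal of $W$ along $c$ is the negative collar direction, so $c$ appears in $\partial_\circ W$ as $c\rev$. If $v$ lies on the negative side, $c$ appears with its own orientation.

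\textbf{The equivalence.} By the above, $\partial_\circ W=B\cup A\rev$ holds exactly when the following two conditions hold:
\begin{enumerate}
\item every component of $A$ is met by $v$ only on its positive side, and every component of $B$ only on its negative side;
\item every component of $A\cup B$ is met by $v$ at all.
\end{enumerate}
Condition (1) is the content of the inclusions $v\cap\beta(A\times[-1,1])\subseteq\beta(A\times[0,1])$ and the analogous inclusion for $B$. These inclusions also exclude the two-sided case, which would otherwise remove a component from $\partial_\circ W$.

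\textbf{Main obstacle.} I expect the main difficulty to be condition (2), i.e.\ the non-emptiness implicit in \enquote{touches}. Read literally, the inclusion is vacuous for components that $v$ does not meet, and then the \enquote{if} direction fails: the closure of $v$ would omit such components from its boundary. So I would make explicit that touching the positive side of $c$ includes $v\cap\beta(c'\times[-1,1])\neq\emptyset$ for every component $c'$ of $c$. With this reading the argument above proves both directions simultaneously.

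The remaining care is routine. One must check that the bicollar can be shrunk compatibly near $\partial M$, using the type (2) charts of the proper hypersurface, so that the half-collars really are connected and give charts of type (4) at $\angle W$. One must also check that the resulting notion of touching is independent of the chosen bicollar.
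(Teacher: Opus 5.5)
Your proposal is correct and follows essentially the same route as the paper. In both, bicollars supply charts of type (3)/(4) at the points of $\overline v\setminus v$, and the orientation of $\partial_\circ W$ is then read off from which side of each component $v$ lies on. Your three-case analysis and your insistence that \enquote{touches} must include $v$ actually meeting every component of $A\cup B$ are both well taken: the paper's one-line assertion $\partial_\circ W=A\cup B$ in the \enquote{if} direction tacitly relies on exactly this non-emptiness. That non-emptiness does hold in the later application in \cref{prop:cob_disj}, where the non-common components of $\Sigma^{O_{i-1}}$ and $\Sigma^{O_i}$ are precisely those adjacent to the sink $v_i$.
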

\begin{proof}
    Let $W$ be the closure of $v$ in $M$. Then $W\setminus v\subseteq A\cup B$. If $W$ is a cobordism from $A$ to $B$, we have $\partial_\circ W=B\cup A\rev$ which implies that $v$ touches the positive side of $A$ and the negative side of $B$.
    
    Now suppose  $v$ touches the positive side of $A$ and the negative side of $B$. We first need to show that $W\subseteq M$ is a submanifold with corner. The points in $v\subseteq W$ have charts of type (1) or~(2) as $v\subseteq M$ is open. The bicollars yield charts of type (3) or~(4) for the points in $W\setminus v\subseteq A\cup B$. Hence, $W\subseteq M$ is a submanifold with corner and $\partial_\circ W=A\cup B$. As $v$ lies on the positive side of $A$ and the negative side of $B$, $W$ defines a cobordism from $A$ to $B$ with the orientation induced by $M$.
\end{proof}

Cobordism generates an equivalence relation on the set of all hypersurfaces in $M$. In other words, $A$ and $B$ are \emph{cobordism-equivalent} if there exist hypersurfaces
\[A = S_0, S_1, \ldots, S_k = B\]
such that each two consecutive $S_{i-1}, S_i$ are cobordant. 

With the terminology established, we can immediately prove the easy direction of our main theorem:
\begin{lem}[Homology obstruction]\label{lem:easymain}
    Let $A,B$ be cobordism-equivalent hypersurfaces in an $n$-dimensional manifold $M$. Then $[A]=[B]\in\h_{n-1}(M,\partial M)$
\end{lem}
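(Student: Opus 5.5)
Since cobordism-equivalence is generated by single cobordisms, an induction along the sequence $A=S_0,\dots,S_k=B$ reduces the claim to one embedded cobordism $W$ from $A$ to $B$. We must then show $[A]=[B]\in\h_{n-1}(M,\partial M)$. By the convention for hypersurfaces with common components, we may first discard the shared part $I=A\cap B$: it contributes the same class $[I]$ to both sides. So we may assume $A$ and $B$ are disjoint, with $\partial_\circ W=B\cup A\rev$.

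The plan is to use the fundamental class of $W$ as a manifold with corner. Its boundary $\partial W=\partial_\circ W\cup\partial_v W$ is a closed $(n-1)$-manifold, smooth away from the corner $\angle W$. Here $\partial_v W=W\cap\partial M$. The fundamental class $[W]\in\h_n(W,\partial W)$ maps under the connecting homomorphism to $[\partial W]\in\h_{n-1}(\partial W)$. Pushing forward to $\h_{n-1}(\partial W,\partial_v W)$ gives the class of $\partial_\circ W$ relative to its boundary. Via excision (after thickening $\angle W$ slightly), this is $[B]-[A]$ in $\h_{n-1}(\partial_\circ W,\partial\partial_\circ W)=\h_{n-1}(B,\partial B)\oplus\h_{n-1}(A,\partial A)$. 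The orientations are those induced from $M$, and the sign comes from $A\rev$. Equivalently, we use the connecting map of the triple $(W,\partial W,\partial_v W)$, which is a composition
\[
\h_n(W,\partial W)\to\h_{n-1}(\partial W,\partial_v W)\cong \h_{n-1}(\partial_\circ W,\angle W).
\]

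Next we map into $M$ using naturality. The inclusion of pairs $(W,\partial_vW)\to(M,\partial M)$ gives a commutative diagram. The connecting homomorphism of the triple $(W,\partial W,\partial_vW)$ followed by $\h_{n-1}(\partial W,\partial_vW)\to\h_{n-1}(W,\partial_vW)$ is zero, since these are consecutive maps in the long exact sequence of the triple. Hence $[B]-[A]$ vanishes already in $\h_{n-1}(W,\partial_vW)$, and therefore also in $\h_{n-1}(M,\partial M)$ after applying the inclusion-induced map. The image there is exactly $[B]-[A]$, because $A\hookrightarrow M$ factors through $(\partial_\circ W,\angle W)\to(W,\partial_vW)$, and similarly for $B$.

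The main obstacle is the bookkeeping with corners. We need that the fundamental class of $W$ restricts to the fundamental classes of the smooth boundaries $A$, $B$ with the correct signs, and that the excision identification is valid. I would handle this by choosing a collar of $\partial M$ adapted to $W$, so that near $\partial M$ the pair $(W,\partial_vW)$ is a product $\partial_vW\times[0,\varepsilon)$. Then $(W,\partial_vW)$ is homotopy-equivalent to a compact manifold with boundary obtained by rounding corners, and the standard statement that $\partial[W]=[\partial W]$ applies. Orientation signs follow from the outward-normal-first convention together with the requirement $\partial_\circ W=B\cup A\rev$.
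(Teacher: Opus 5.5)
Your proposal is correct and follows essentially the same route as the paper: reduce to a single cobordism, use the long exact sequence of the triple $(W,\partial W,\partial W\cap\partial M)$, identify the image of $[W]$ with $[B]-[A]$ via excision, and push forward to $(M,\partial M)$. Your additional remarks on discarding the common components $I=A\cap B$ and on handling the corner via collars fill in details that the paper leaves implicit.
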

\begin{proof}
    We may assume that $A,B$ are cobordant, so let $W$ be a cobordism from~$A$ to~$B$. To see that $W$ exhibits the classes $[A], [B]$ as homologous, consider the long exact sequence of the triple $(W, \partial W, \partial W \cap \partial M)$ which contains the top row of the following diagram.
    \[\begin{tikzcd}
        \h_n(W, \partial W) \ar[r,"\partial"] & \h_{n-1}(\partial W, \partial W \cap \partial M) \ar[r] & \h_{n-1} (W,\partial W \cap \partial M)\ar[d]\\
        & \h_{n-1}(B \cup A\rev, \partial B \cup \partial A\rev)\ar[u, "\cong", "\text{excision}"']\ar[r]& \h_{n-1}(M, \partial M)
    \end{tikzcd}\]
    The unlabeled arrows are inclusion-induced maps, so the diagram commutes.

    Let $[W]\in \h_{n}(W, \partial W)$ be the fundamental class. The connecting homomorphism $\partial$ sends~$[W]$ to the class $\omega\in \h_{n-1}(\partial W, \partial W \cap \partial M)$ represented by $\partial W$. In particular, the top-right map sends $\omega$ to $0$. 
    
    On the other hand, $\omega$ corresponds under the excision isomorphism to the fundamental class $[B]-[A] \in \h_{n-1}(B \cup A\rev,\partial B \cup \partial A\rev)$, and so in $\h_{n-1}(M, \partial M)$ we have $[B]-[A] =0$.
\end{proof}

We will later want to consider handle decompositions for our cobordisms. To apply \cref{thm:handle_decomp} we will need to assume that they are \emph{vertical in the boundary} in the sense that $\partial_vW$ is a union of product cobordisms from~$\partial A$ to~$\partial B$ with closed components of~$\partial M$. Cobordism with vertical boundary induces another equivalence relation on the set of all hypersurfaces in~$M$. In other words, two hypersurfaces $A$, $B$ in $M$ are \emph{cobordism-equivalent with vertical boundary} if there exist hypersurfaces 
\[
A=S_0,S_1,\dots,S_k=B
\]
such that each two consecutive $S_{i-1},S_i$ are cobordant with vertical boundary. Clearly, this can only be the case if for every boundary component $c\in\pi_0(\partial M)$, the intersections $A\cap c$ and $B\cap c$ are diffeomorphic. For a convenient sufficient condition, we say hypersurfaces $A$, $B$ in $M$ \emph{only allow product cobordisms in the boundary} if
\begin{itemize}
    \item their boundaries are disjoint, and
    \item  every cobordism in $\partial M$ between unions of components of $\partial A\cup \partial B$ is a union of product cobordisms and closed components of $\partial M$.
\end{itemize}
We now prove our main theorem under the additional assumption that the hypersurfaces are disjoint.
\begin{prop}[Cobordism of disjoint hypersurfaces]\label{prop:cob_disj}
    Let $A,B$ be disjoint hypersurfaces in an $n$-dimensional manifold $M$ that represent the same homology class in $\h_{n-1}(M,\partial M)$. Then $A$ and $B$ are cobordism-equivalent. 

    If $A$, $B$ only allow product cobordisms in the boundary, $A$ and $B$ are cobordism-equivalent with vertical boundary.
\end{prop}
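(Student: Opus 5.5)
The plan is to encode the combinatorics of $M\setminus(A\cup B)$ in a graph and then apply \cref{lem:sinkpushup}. Let $\Gamma$ be the finite graph with
\begin{itemize}
    \item one vertex for each component of $M\setminus(A\cup B)$, and
    \item one edge for each component $c$ of $A\cup B$, joining the region touching the negative side of $c$ to the region touching its positive side.
\end{itemize}
Finiteness holds by compactness, and a bicollar shows each side of $c$ touches exactly one region. Orient $\Gamma$ by the orientation $O$ in which an edge coming from a component of $A$ points towards the region on its \emph{positive} side, and an edge coming from a component of $B$ points towards the region on its \emph{negative} side.

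The first step is to verify condition (1) of \cref{lem:sinkpushup} for $O$. Every cycle $\gamma$ in $\Gamma$ is realized by a closed loop $\ell_\gamma$ in $M$ that runs through the regions and crosses each component $c$ transversally once, along a bicollar fiber, for each time $\gamma$ traverses the edge $c$. Traversing an $A$-edge along $O$ is a crossing of $A$ from its negative to its positive side, i.e.\ a $+1$ contribution to the algebraic intersection number with $A$. Traversing a $B$-edge along $O$ is a $-1$ contribution with $B$. Hence
\[
\#\{\text{edges traversed along }O\}-\#\{\text{edges traversed against }O\}=\ell_\gamma\cdot A-\ell_\gamma\cdot B .
\]
This equals $\langle [\ell_\gamma],[A]-[B]\rangle$ under the intersection pairing $\h_1(M)\times\h_{n-1}(M,\partial M)\to\ZZ$, e.g.\ via Lefschetz duality $\h_{n-1}(M,\partial M)\cong \h^1(M)$, and so it vanishes since $[A]=[B]$. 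In particular $\Gamma$ has no loops, since a loop is a cycle of length one that is traversed along $O$. I expect this identification of the combinatorial count with a homological intersection number, including the sign conventions and the presence of $\partial M$, to be the main point requiring care. If this step fails, I would instead construct directly a locally constant height function on $M\setminus(A\cup B)$ from a map $M\to S^1$ dual to $[A]-[B]$.

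Applying \cref{lem:sinkpushup} gives orientations $O=O_0,\dots,O_k=O\rev$, each obtained from the previous one by pushing up a sink. For each $i$, define $S_i$ as the union of
\begin{itemize}
    \item the components of $A$ whose edge is oriented in $O_i$ as in $O$, and
    \item the components of $B$ whose edge is oriented in $O_i$ opposite to $O$,
\end{itemize}
all carrying their original orientations. Then $S_0=A$ and $S_k=B$, and each $S_i$ is a hypersurface because its components are disjoint.

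Now suppose $O_{i+1}$ arises from $O_i$ by pushing up the sink $v$. Checking the four cases shows the following.
\begin{itemize}
    \item The edges at $v$ that lie in $S_i$ (unchanged $A$-edges, reversed $B$-edges) all have $v$ on their positive side.
    \item The edges at $v$ that enter $S_{i+1}$ (reversed $A$-edges, unchanged $B$-edges) all have $v$ on their negative side.
\end{itemize}
All other components of $S_i$ and $S_{i+1}$ agree and, since $\Gamma$ has no loops at $v$, are not adjacent to $v$. By \cref{lem:comp_to_cob}, together with the convention for common components $I=S_i\cap S_{i+1}$, the closure $W$ of $v$ is a cobordism from $S_i$ to $S_{i+1}$. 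This proves the first statement.

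For the vertical-boundary statement, let $W$ be such a closure. Then $\partial_vW=W\cap\partial M$ is the closure of a union of components of $\partial M\setminus(\partial A\cup\partial B)$. Its boundary is the part of $\partial S_i\cup\partial S_{i+1}$ adjacent to $v$, so it is a cobordism in $\partial M$ between unions of components of $\partial A\cup\partial B$, with orientations induced from the bicollars. By the hypothesis that $A$ and $B$ only allow product cobordisms in the boundary, $\partial_vW$ is a union of product cobordisms and closed components of $\partial M$. Hence every step of the sequence is a cobordism with vertical boundary.
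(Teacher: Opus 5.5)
Your proposal is correct and takes the same approach as the paper's proof. It uses the same graph on the components of $M\setminus(A\cup B)$ and the same orientation $O$ associated to $A$, verifies the cycle condition via $[\gamma]\cdot([A]-[B])=0$, applies \cref{lem:sinkpushup}, and then uses \cref{lem:comp_to_cob} for each pushed-up sink; your $S_i$ are exactly the paper's $\Sigma^{O_i}$, and your remarks that $\Gamma$ has no loops and that unchanged components fall under the common-components convention make explicit two points the paper leaves implicit.
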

\begin{proof}
    Consider the hypersurface $\Sigma\coloneq  A\cup B$. We define the graph $\Gamma$ whose vertices are the connected components of $M\setminus \Sigma$ and whose edges are the components of $\Sigma$. A subset $C$ of components of $\Sigma$ induces an orientation of $\Gamma$ by orienting the edges in $C$ to go from the vertex touching their negative side to the one touching their positive side, and the other edges in the reverse way. Conversely, an orientation $O$ of $\Gamma$ corresponds in this way to a hypersurface $\Sigma^O$.

    First consider $\Gamma$ with the orientation $O$ corresponding to $\Sigma^O= A$. If $\gamma$ is a cycle in $\Gamma$, the number $p$ of edges $\gamma$ traverses in the direction of $O$ must equal the number $n$ of edges $\gamma$ traverses against the direction of $O$. Indeed, we can realize $\gamma$ as a loop in $M$ that is transverse to $\Sigma$. Then
    \[
        p-n=[\gamma]\cdot[A\cup B\rev]=[\gamma]\cdot([A]-[B])=0
    \]
    Thus \cref{lem:sinkpushup} yields a sequence of orientations 
    \[
    O=O_0,O_1,\dots,O_k=O\rev
    \]
    of $\Gamma$ such that $O_i$ arises from $O_{i-1}$ by pushing up a sink at a vertex $v_i$. The first claim follows as the closure of $v_i\in\pi_0(M\setminus\Sigma)$ is a cobordism from $\Sigma^{O_{i-1}}$ to $\Sigma^{O_i}$ by \cref{lem:comp_to_cob}, and $O\rev$ is the orientation corresponding to $B$.
    
    Now suppose $A,B$ only allow product cobordisms in the boundary. The vertical boundary~$\partial_v v_i$ is a cobordism between unions of components of $\partial A\cup\partial B$. It follows that $v_i$ is a cobordism with vertical boundary.
\end{proof}
\begin{figure}[H]
    \centering
    \quad\qquad\begin{tikzpicture}[xscale=0.8]
    \clip[rounded corners] (-1.5,-1.8) rectangle (1.5,1.8);
    \fill[black!05] (-1.5,-1.8) rectangle (1.5,1.8);
    \fill[black!20] (-1.5,-1) rectangle (1.5,-1.8);  
    
    \draw[ultra thick, color1] (0,1) -- +(1.5,0); 
    \draw[ultra thick, color2, dotted] (0,1) -- +(-1.5,0); 
    \draw (0,1) pic[ultra thick]{dot};
    \draw[color1,->] (1.3,1) -- +(0,-0.25);
    \draw[color2,->] (-1.3,1) -- +(0,0.25);

    \draw[ultra thick, color1] (0,0) -- +(1.5,0); 
    \draw[ultra thick, color2, dotted] (0,0) -- +(-1.5,0); 
    \draw (0,0) pic[ultra thick]{dot};
    \draw[color1,->] (1.3,0) -- +(0,-0.25);
    \draw[color2,->] (-1.3,0) -- +(0,0.25);

    \draw[ultra thick, color1] (0,-1) -- +(1.5,0); 
    \draw[ultra thick, color2, dotted] (0,-1) -- +(-1.5,0); 
    \draw (0,-1) pic[ultra thick]{dot};
    \draw[color1,->] (1.3,-1) -- +(0,-0.25);
    \draw[color2,->] (-1.3,-1) -- +(0,0.25);
        
    \path (1.3,0) node[above,color1]{$A$} (-1.3,0) node[below,color2]{$B$};
    \end{tikzpicture}\quad
    \begin{tikzpicture}[xscale=0.8]
    \clip[rounded corners] (-1.5,-1.8) rectangle (1.5,1.8);
    \fill[black!05] (-1.5,-1.8) rectangle (1.5,1.8);
    \fill[black!20] (-1.5,0) rectangle (1.5,-1);  
    
    \draw[ultra thick, color1] (0,1) -- +(1.5,0); 
    \draw[ultra thick, color2, dotted] (0,1) -- +(-1.5,0); 
    \draw (0,1) pic[ultra thick]{dot};
    \draw[color1,->] (1.3,1) -- +(0,-0.25);
    \draw[color2,->] (-1.3,1) -- +(0,0.25);

    \draw[ultra thick, color1] (0,0) -- +(1.5,0); 
    \draw[ultra thick, color2, dotted] (0,0) -- +(-1.5,0); 
    \draw (0,0) pic[ultra thick]{dot};
    \draw[color1,->] (1.3,0) -- +(0,-0.25);
    \draw[color2,->] (-1.3,0) -- +(0,0.25);

    \draw[ultra thick, color1, dotted] (0,-1) -- +(1.5,0); 
    \draw[ultra thick, color2] (0,-1) -- +(-1.5,0); 
    \draw (0,-1) pic[ultra thick]{dot};
    \draw[color1,->] (1.3,-1) -- +(0,-0.25);
    \draw[color2,->] (-1.3,-1) -- +(0,0.25);
        
    \path (1.3,0) node[above,color1]{$A$} (-1.3,0) node[below,color2]{$B$};
    \end{tikzpicture}\quad\vspace*{1em}
    \begin{tikzpicture}[xscale=0.8]
    \clip[rounded corners] (-1.5,-1.8) rectangle (1.5,1.8);
    \fill[black!05] (-1.5,-1.8) rectangle (1.5,1.8);
    \fill[black!20] (-1.5,1) rectangle (1.5,0);  
    
    \draw[ultra thick, color1] (0,1) -- +(1.5,0); 
    \draw[ultra thick, color2, dotted] (0,1) -- +(-1.5,0); 
    \draw (0,1) pic[ultra thick]{dot};
    \draw[color1,->] (1.3,1) -- +(0,-0.25);
    \draw[color2,->] (-1.3,1) -- +(0,0.25);

    \draw[ultra thick, color1, dotted] (0,0) -- +(1.5,0); 
    \draw[ultra thick, color2] (0,0) -- +(-1.5,0); 
    \draw (0,0) pic[ultra thick]{dot};
    \draw[color1,->] (1.3,0) -- +(0,-0.25);
    \draw[color2,->] (-1.3,0) -- +(0,0.25);

    \draw[ultra thick, color1] (0,-1) -- +(1.5,0); 
    \draw[ultra thick, color2, dotted] (0,-1) -- +(-1.5,0); 
    \draw (0,-1) pic[ultra thick]{dot};
    \draw[color1,->] (1.3,-1) -- +(0,-0.25);
    \draw[color2,->] (-1.3,-1) -- +(0,0.25);
        
    \path (1.3,0) node[above,color1]{$A$} (-1.3,0) node[below,color2]{$B$};
    \end{tikzpicture}\newline
    \begin{tikzpicture}[xscale=0.8]
    \clip[rounded corners] (-1.5,-1.8) rectangle (1.5,1.8);
    \fill[black!05] (-1.5,-1.8) rectangle (1.5,1.8);
    \fill[black!20] (-1.5,-1) rectangle (1.5,-1.8);  
    
    \draw[ultra thick, color1, dotted] (0,1) -- +(1.5,0); 
    \draw[ultra thick, color2] (0,1) -- +(-1.5,0); 
    \draw (0,1) pic[ultra thick]{dot};
    \draw[color1,->] (1.3,1) -- +(0,-0.25);
    \draw[color2,->] (-1.3,1) -- +(0,0.25);

    \draw[ultra thick, color1] (0,0) -- +(1.5,0); 
    \draw[ultra thick, color2, dotted] (0,0) -- +(-1.5,0); 
    \draw (0,0) pic[ultra thick]{dot};
    \draw[color1,->] (1.3,0) -- +(0,-0.25);
    \draw[color2,->] (-1.3,0) -- +(0,0.25);

    \draw[ultra thick, color1] (0,-1) -- +(1.5,0); 
    \draw[ultra thick, color2, dotted] (0,-1) -- +(-1.5,0); 
    \draw (0,-1) pic[ultra thick]{dot};
    \draw[color1,->] (1.3,-1) -- +(0,-0.25);
    \draw[color2,->] (-1.3,-1) -- +(0,0.25);
        
    \path (1.3,0) node[above,color1]{$A$} (-1.3,0) node[below,color2]{$B$};
    \end{tikzpicture}\quad
    \begin{tikzpicture}[xscale=0.8]
    \clip[rounded corners] (-1.5,-1.8) rectangle (1.5,1.8);
    \fill[black!05] (-1.5,-1.8) rectangle (1.5,1.8);
    \fill[black!20] (-1.5,0) rectangle (1.5,-1);  
    
    \draw[ultra thick, color1, dotted] (0,1) -- +(1.5,0); 
    \draw[ultra thick, color2] (0,1) -- +(-1.5,0); 
    \draw (0,1) pic[ultra thick]{dot};
    \draw[color1,->] (1.3,1) -- +(0,-0.25);
    \draw[color2,->] (-1.3,1) -- +(0,0.25);

    \draw[ultra thick, color1] (0,0) -- +(1.5,0); 
    \draw[ultra thick, color2, dotted] (0,0) -- +(-1.5,0); 
    \draw (0,0) pic[ultra thick]{dot};
    \draw[color1,->] (1.3,0) -- +(0,-0.25);
    \draw[color2,->] (-1.3,0) -- +(0,0.25);

    \draw[ultra thick, color1, dotted] (0,-1) -- +(1.5,0); 
    \draw[ultra thick, color2] (0,-1) -- +(-1.5,0); 
    \draw (0,-1) pic[ultra thick]{dot};
    \draw[color1,->] (1.3,-1) -- +(0,-0.25);
    \draw[color2,->] (-1.3,-1) -- +(0,0.25);
        
    \path (1.3,0) node[above,color1]{$A$} (-1.3,0) node[below,color2]{$B$};
    \end{tikzpicture}\quad
    \begin{tikzpicture}[xscale=0.8]
    \clip[rounded corners] (-1.5,-1.8) rectangle (1.5,1.8);
    \fill[black!05] (-1.5,-1.8) rectangle (1.5,1.8);
    \fill[black!20] (-1.5,-1) rectangle (1.5,-1.8);  
    
    \draw[ultra thick, color1, dotted] (0,1) -- +(1.5,0); 
    \draw[ultra thick, color2] (0,1) -- +(-1.5,0); 
    \draw (0,1) pic[ultra thick]{dot};
    \draw[color1,->] (1.3,1) -- +(0,-0.25);
    \draw[color2,->] (-1.3,1) -- +(0,0.25);

    \draw[ultra thick, color1, dotted] (0,0) -- +(1.5,0); 
    \draw[ultra thick, color2] (0,0) -- +(-1.5,0); 
    \draw (0,0) pic[ultra thick]{dot};
    \draw[color1,->] (1.3,0) -- +(0,-0.25);
    \draw[color2,->] (-1.3,0) -- +(0,0.25);

    \draw[ultra thick, color1] (0,-1) -- +(1.5,0); 
    \draw[ultra thick, color2, dotted] (0,-1) -- +(-1.5,0); 
    \draw (0,-1) pic[ultra thick]{dot};
    \draw[color1,->] (1.3,-1) -- +(0,-0.25);
    \draw[color2,->] (-1.3,-1) -- +(0,0.25);
        
    \path (1.3,0) node[above,color1]{$A$} (-1.3,0) node[below,color2]{$B$};
    \end{tikzpicture}\quad
    \begin{tikzpicture}[xscale=0.8]
    \clip[rounded corners] (-1.5,-1.8) rectangle (1.5,1.8);
    \fill[black!05] (-1.5,-1.8) rectangle (1.5,1.8);
    
    \draw[ultra thick, color1, dotted] (0,1) -- +(1.5,0); 
    \draw[ultra thick, color2] (0,1) -- +(-1.5,0); 
    \draw (0,1) pic[ultra thick]{dot};
    \draw[color1,->] (1.3,1) -- +(0,-0.25);
    \draw[color2,->] (-1.3,1) -- +(0,0.25);

    \draw[ultra thick, color1, dotted] (0,0) -- +(1.5,0); 
    \draw[ultra thick, color2] (0,0) -- +(-1.5,0); 
    \draw (0,0) pic[ultra thick]{dot};
    \draw[color1,->] (1.3,0) -- +(0,-0.25);
    \draw[color2,->] (-1.3,0) -- +(0,0.25);

    \draw[ultra thick, color1, dotted] (0,-1) -- +(1.5,0); 
    \draw[ultra thick, color2] (0,-1) -- +(-1.5,0); 
    \draw (0,-1) pic[ultra thick]{dot};
    \draw[color1,->] (1.3,-1) -- +(0,-0.25);
    \draw[color2,->] (-1.3,-1) -- +(0,0.25);
        
    \path (1.3,0) node[above,color1]{$A$} (-1.3,0) node[below,color2]{$B$};
    \end{tikzpicture}
    
    \caption{The graph corresponding to the example from \cref{fig:exampleknot} is the one we considered in \cref{fig:examplegraph}. This is the resulting sequence of cobordisms from $A$ to $B$.}
\end{figure}

This proposition only proves cobordism-equivalence for hypersurfaces which are already disjoint. The final ingredient we need for the proof of our main result is a theorem of Herrmann and the fourth author \cite[Theorem 1.1]{HQ25} allowing us to make hypersurfaces disjoint:

\begin{thm}[Sequentially disjoint hypersurfaces]\label{thm:HQ}
    Let $A,B$ be hypersurfaces in $M$ representing the same homology class $\phi\in \h_{n-1}(M, \partial M)$. Then there exists a sequence of hypersurfaces in $M$
    \[A = S_0, S_1, \ldots, S_m=B,\]
    all representing $\phi$, such that each two consecutive $S_{i-1}, S_i$ are disjoint. 
    

    If $A$, $B$ only allow product cobordisms in the boundary, we may assume that $S_{i-1}$, $S_i$ also only allow product cobordisms in the boundary.
\end{thm}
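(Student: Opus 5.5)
Since the first part of \cref{thm:HQ} is exactly \cite[Theorem 1.1]{HQ25}, I will not reprove it. I will recall its construction and check that it can be run while respecting the boundary condition. The plan goes as follows.

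First, after a small isotopy I make $A$ and $B$ transverse, including along $\partial M$. Next I pick a Poincaré–Lefschetz dual class $u\in \h^1(M;\ZZ)=[M,S^1]$ to $\phi$. The hypersurfaces $A$ and $B$ come from smooth maps $f_A,f_B\colon M\to S^1$, each having a regular value whose preimage is the given surface. These maps are obtained via the Pontryagin–Thom construction applied to their bicollars. Since $[A]=[B]$, the maps $f_A$ and $f_B$ are homotopic. A homotopy $F\colon M\times[0,1]\to S^1$ can be chosen generic relative to its ends and factored through a lift to $\RR$ of the difference: I write $f_B=f_A\cdot e^{2\pi i g}$ for a real-valued $g$. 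Then the intermediate maps $f_t=f_A e^{2\pi i t g}$ with $t\in[0,1]$ interpolate. Subdividing $[0,1]$ finely enough that $|(t_i-t_{i-1})g|<1/2$ everywhere, consecutive maps $f_{t_{i-1}}, f_{t_i}$ never take antipodal values. So, choosing generic regular values $\theta_{i-1},\theta_i$ appropriately, for instance $\theta_i$ close to the antipode of the value used before, the level sets $S_{i-1}=f_{t_{i-1}}^{-1}(\theta_{i-1})$ and $S_i=f_{t_i}^{-1}(\theta_i)$ are disjoint. Each of them represents $\phi$, since the map is homotopic to $f_A$. The endpoints may need a preliminary step: $A$ is disjoint from a parallel pushoff $A'$, which is a level set of $f_A$ at another value, and similarly for $B$. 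This is, in essence, the argument of \cite{HQ25}.

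For the boundary addendum, I would carry out the same construction on $\partial M$ first. The restrictions $f_t|_{\partial M}$ give level sets in $\partial M$, and the whole sequence restricted to the boundary is a sequence of disjoint hypersurfaces of $\partial M$ between $\partial A$ and $\partial B$. The natural strategy is to choose $g$ so that $g|_{\partial M}$ is as simple as possible. Concretely, I would use the hypothesis on $A$ and $B$ to homotope $f_A|_{\partial M}$ to $f_B|_{\partial M}$ through maps whose level sets only sweep across product regions, and then extend $g$ from $\partial M$ to $M$. The hypothesis that every embedded cobordism in $\partial M$ between components of $\partial A\cup\partial B$ is a product says, on each component $c$ of $\partial M$, that the complementary regions of $\partial A\cup\partial B$ in $c$ are products or closed components. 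On such regions the level sets of $f_t|_c$ can be taken to be parallel copies of components of $\partial A$ or $\partial B$, which are pushed across collars. Any two consecutive boundaries $\partial S_{i-1},\partial S_i$ are then parallel families of copies of $\partial A$ and $\partial B$ inside those product regions. Their complementary regions are still products or closed components, so consecutive pairs again only allow product cobordisms in the boundary.

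The main obstacle is this boundary step. Arranging $g|_{\partial M}$ so that every intermediate boundary level set lies in the product structure is delicate, because the hypothesis concerns only $\partial A\cup\partial B$, not the intermediate surfaces. I would first try to prove a small lemma: if a hypersurface $\partial S$ of $c$ is disjoint from $\partial A\cup\partial B$ and consists of parallel copies of their components within product regions, then the product-cobordism condition is inherited by $(\partial S_{i-1},\partial S_i)$. I would then insist that the interpolation on $\partial M$ is performed by such parallel copies, extending $g$ over a collar and then to the interior.
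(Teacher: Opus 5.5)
Your argument for the first assertion is correct, but it differs from the paper's. The paper, recalling \cite{HQ25}:
\begin{itemize}
\item replaces $A$ by a parallel copy transverse to $B$;
\item forms the oriented surgery $\Sigma$ of $A\cup B$, which represents $2\phi$;
\item splits $\Sigma$ along a bipartition of the regions of $M\setminus\Sigma$ into $S\sqcup T$ with $[S]=[T]=\phi$;
\item inducts on $|\pi_0(A\cap B)|$, which is at least halved at each step.
\end{itemize}
Your interpolation $f_t=f_Ae^{2\pi i tg}$, with a fine subdivision and nearly antipodal regular values, is shorter. It needs no transversality, no induction and no torsion argument. Your initial isotopy is superfluous, and would in any case have to be bridged, since $S_0$ must be $A$ itself. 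What your route gives up is any control of the intermediate surfaces in terms of $A$ and $B$. That control is exactly what the paper uses for the addendum: since $\partial A\cap\partial B=\emptyset$, the oriented surgery happens away from $\partial M$. So $\partial S$ is a small push-off of some components of $\partial A\cup\partial B$, and a product with a thin collar added or removed is still a product.

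For the addendum there is a genuine gap. You name the boundary step as the main obstacle but do not carry it out, and the mechanism you propose is not available. The function $g$ is determined by $f_A$ and $f_B$ up to an additive integer on each component of $M$, so it cannot be prescribed on $\partial M$ and then extended. In your construction $\partial S_i$ is an arbitrary regular level set of $f_{t_i}|_{\partial M}$, and these level sets sweep across all of $\partial M$. Unless $f_A,f_B$ are chosen with care, $\partial S_i$ can contain components that cobound non-product regions, e.g.\ small spheres around a local extremum of $\arg f_{t_i}|_{\partial M}$, which can occur where bicollars of $\partial A$ and $\partial B$ overlap. Your ``small lemma'' presupposes precisely that this does not happen.

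The missing observation is that locally $f_t$ lifts to $(1-t)\tilde f_A+t(\tilde f_B+m)$ with $m\in\ZZ$, so $d\arg f_t=(1-t)\,d\arg f_A+t\,d\arg f_B$. Take $f_A$, $f_B$ to be Pontryagin--Thom maps of bicollars whose restrictions to $\partial M$ are disjoint; this is possible since $\partial A\cap\partial B=\emptyset$. Then on $\partial M$ the map $f_t$ is locally constant outside these collars. On the collar of a component $d$ of $\partial A$ the map $f_B$ is constant, so $\arg f_t$ is a monotone function of the collar coordinate sweeping an arc of length $2\pi(1-t)$; the same holds symmetrically for $\partial B$. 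Hence for $0<t<1$ every regular level set of $f_t|_{\partial M}$ consists of at most one copy $d\times\{s\}$ of each component $d$ of $\partial A\cup\partial B$, oriented like $d$. For $t\in\{0,1\}$ you get parallel copies of $A$ or $B$.

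Your lemma then still needs an argument. Since $[\partial A]=[\partial B]$ in $\h_{n-2}(\partial M)$, no region of $\partial M\setminus(\partial A\cup\partial B)$ touches a component from both sides. So the hypothesis makes every component of $\partial M$ meeting $\partial A\cup\partial B$ a cyclic chain of product regions with coherently oriented ends. Inserting coherently oriented parallel copies then keeps all complementary regions products.
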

\begin{proof}
    We first reduce the problem to the case where the intersection of $A$ and~$B$ is transverse. 
    To that end, let $A_+$ be a parallel copy of $A$ (in particular, $A_+ \cap A = \emptyset$), and then by a general position argument perturb $A_+$ by a small isotopy to produce a hypersurface $A'$ that is still disjoint from $A$ and intersects~$B$ transversely. $A'$ is isotopic, and hence homologous, to $A$, so taking $S_1=A'$, we are left to find sequentially disjoint homologous hypersurfaces from $A'$ to $B$.

    Working under the assumption that $A$ and $B$ are transverse, we prove the result by induction on the number of components of $A\cap B$. Of course if this number is $0$ there is nothing to prove. If $|\pi_0(A \cap B)| \ge 1$, we will show that there is a hypersurface $S$ representing $\phi$ that intersects both $A$ and $B$ transversely, and for which
    \[\tag{$\star$} |\pi_0(S\cap A)| \le \tfrac 12|\pi_0(A \cap B)| \quad \text{and} \quad |\pi_0(S\cap B)| \le \tfrac 12 |\pi_0(A\cap B)|. \]
    Once such $S$ is constructed, the induction hypothesis yields sequences of hypersurfaces from $A$ to $S$ and from $S$ to $B$, which can be concatenated to produce a sequence from $A$ to $B$.

    The first step to produce $S$ is to construct a so-called ``oriented surgery'' of $A$ and $B$. This is a hypersurface $\Sigma_0$ obtained from $A\cup B$ by replacing a small neighborhood of $A\cap B$ with a ``ramp'' that pairs each sheet of $A$ with a sheet of~$B$ in a manner consistent with orientations; \cref{fig:oriented_surgery} (center) gives a visual idea of the construction. $\Sigma_0$ inherits a canonical orientation from $A$  and $B$. We then push $\Sigma_0$ in the positive direction of its normal bundle, as in \cref{fig:oriented_surgery} (right), to obtain an isotopic hypersurface $\Sigma$ transverse to $A$ and $B$. 

    \begin{figure}[H]
        \centering
        \begin{tikzpicture}[scale=0.9]
        \fill[black!05,rounded corners] (-2,-2) rectangle (2,2);
        \draw[ultra thick, color1] (-2,0) -- (2,0);
        \draw[ultra thick, color2] (0,-2) -- (0,2);
        \draw[color1,->] (1.7,0) node[below]{$A$} -- +(0,0.277);
        \draw[color2,->] (0,1.7) node[right]{$B$} -- +(-0.277,0);
        \path (0,0) pic[ultra thick]{dot};
        \end{tikzpicture}\quad
        \begin{tikzpicture}[scale=0.9]
        \fill[black!05,rounded corners] (-2,-2) rectangle (2,2);
        \draw[thick] (-2,0) -- (-1,0) .. controls (0,0) .. (0,1) -- (0,2);
        \draw[thick,rotate=180] (-2,0) -- (-1,0) .. controls (0,0) .. (0,1) -- (0,2);
        \draw[->] (1.7,0) -- +(0,0.277);
        \draw[->] (0,1.7) -- +(-0.277,0);
        \path (-1.7,0) node[above]{$\Sigma_0$};
        \end{tikzpicture}\quad
        \begin{tikzpicture}[scale=0.9]
        \clip (-2,-2) rectangle (2,2);
        \fill[black!05,rounded corners] (-2,-2) rectangle (2,2);
        \draw[ultra thick, color1] (-2,0) -- (2,0);
        \draw[ultra thick, color2] (0,-2) -- (0,2);
        \path (0,0) pic[ultra thick]{dot};
        \draw[shift={(-0.4,0.4)},thick] (-2,0) -- (-1,0) .. controls (0,0) .. (0,1) -- (0,2);
        \draw[shift={(-0.4,0.4)},rotate=180,thick] (-2.5,0) -- (-1,0) .. controls (0,0) .. (0,1) -- (0,2.5);
        \draw[->] (1.7,0.4) -- +(0,0.277);
        \draw[->] (-0.4,1.7) -- +(-0.277,0);
        \path (-1.7,0.4) node[above]{$\Sigma$};
        \end{tikzpicture}
    \caption{The construction of $\Sigma$ from $A$ and $B$.\\Left: a neighborhood of the intersection $A\cap B$.\\Center: the oriented surgery $\Sigma_0$ is obtained from $A\cup B$ by resolving the intersection in a manner consistent with all orientations.\\Right: $\Sigma$ is a push-off of $\Sigma_0$ in the positive normal direction.}
        \label{fig:oriented_surgery}
    \end{figure}

    The first observation we make about $\Sigma$ is that it represents the homology class~$2\phi$. Our next goal is to decompose $\Sigma$ as a disjoint union of two hypersurfaces~$S,T$, each representing $\phi$, and such that one of them has controlled intersection with $A$ and $B$ as in $(\star)$.
    Consider the directed graph $\Gamma$ whose vertices are the connected components of~$M\setminus \Sigma$, and whose edges are the components~$C$ of~$\Sigma$ oriented from the component of~$M\setminus\Sigma$ touching the negative side of $C$ to the one touching its positive side.

    The graph $\Gamma$ is bipartite by \cref{lem:crit_bip} since it has no cycles of odd length.
    Indeed, each cycle in $\Gamma$ may be used to produce a loop $\gamma$ in~$M$ that visits the components of $M\setminus \Sigma$ in the same sequence and intersects~$\Sigma$ transversely. In particular, a cycle of odd length would produce a loop~$\gamma$ such that the intersection product $[\gamma] \cdot [\Sigma]$ is odd, a contradiction to the fact that $[\Sigma] = 2\phi$. 

    So now let $W$ be the union of all components of $M\setminus \Sigma$ that make up one of the two subsets giving a bipartition of $\operatorname{V}(\Gamma)$, and define~$T$ (respectively~$S$) as the union of components of $\Sigma$ whose positive (respectively negative) side touches $W$. In other words, $W$ is a cobordism from $S$ to $T$, and so $[S]=[T]$ by \cref{lem:easymain}. Therefore $2[S] = [S]+[T]  =[\Sigma] = 2\phi$, implying $[S]=\phi$ as $\h_{n-1}(M, \partial M) \cong \h^1(M)\cong \operatorname{Hom}(\h_1(M), \ZZ)$ is torsion-free.

    All that is left to do is show that one of $S,T$ satisfies $(\star)$, which may be seen by taking a closer look at \cref{fig:oriented_surgery} (right). The key observation is that each component of $A\cap B$ produces precisely one component in $\Sigma \cap A$ and one in $\Sigma\cap B$, and both of these lie in the \enquote{lower} sheet of~$\Sigma$ -- in particular, they both lie in $S$ or both lie in $T$. Thus, we have
    \[ |\pi_0(S\cap A)| = |\pi_0(S \cap B)| \quad\text{and} \quad |\pi_0(T\cap A)| = |\pi_0(T \cap B)|,\]
    with these two numbers adding up to $|\pi_0(A \cap B)|$. Thus, at least one is bounded above by $\tfrac 12|\pi_0(A \cap B)|$. Assuming without loss of generality it is the one coming from $S$, we conclude that~$(\star)$ holds, finishing the proof of the first part.
    
    Now assume $A$, $B$ only allow product cobordisms in the boundary. By construction $\partial S$ consists of slightly shifted components of $\partial A\cup \partial B$ where we have ensured all shifts are small enough that they do not intersect $\partial A\cup \partial B$. Hence, cobordisms between components of $\partial A\cup\partial S$ are cobordisms between components of $\partial A\cup \partial B$ with a small product cobordism added or removed. Note that removing a product cobordism from a product cobordism leaves a product cobordism by the uniqueness of boundary collars. It follows that $A,S$ only allow product cobordisms in the boundary. Analogously, the same holds for $S,B$. 
    \end{proof}
    With this we can prove our main theorem with a strengthening we will need for the application to Seifert surfaces.
    \begin{thm}[Homology and cobordism-equivalence]\label{thm:main}
        Two hypersurfaces $A$,\nobreak\ $B$ in an $n$-dimensional manifold $M$ are cobordism-equivalent if and only if they represent the same homology class in $\h_{n-1}(M,\partial M)$.
        
            If $A$, $B$ only allow product cobordisms in the boundary, the above conditions are also equivalent to $A$ and $B$ being cobordism-equivalent with vertical boundary. 
    \end{thm}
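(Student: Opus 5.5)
The plan is to assemble the theorem from the three results established just before it. For the implication \enquote{cobordism-equivalent $\Rightarrow$ same homology class}, I would invoke \cref{lem:easymain} directly. If $A$ and $B$ are cobordism-equivalent with vertical boundary, then in particular they are cobordism-equivalent, so the same lemma gives $[A]=[B]$. This settles both easy directions of the second statement as well.

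For the converse, I assume $[A]=[B]=\phi\in\h_{n-1}(M,\partial M)$. \Cref{thm:HQ} then produces a sequence $A=S_0,S_1,\dots,S_m=B$ of hypersurfaces, all representing $\phi$, with $S_{i-1}\cap S_i=\emptyset$ for every $i$. Each consecutive pair is thus a pair of disjoint hypersurfaces in the same homology class. \Cref{prop:cob_disj} makes each pair $S_{i-1},S_i$ cobordism-equivalent. Concatenating these chains of cobordisms gives a chain from $A$ to $B$, since cobordism-equivalence is by definition the equivalence relation generated by cobordism and hence transitive.

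For the refinement, suppose $A$ and $B$ only allow product cobordisms in the boundary. The second part of \cref{thm:HQ} lets me choose the sequence so that every consecutive pair $S_{i-1},S_i$ also only allows product cobordisms in the boundary. The second part of \cref{prop:cob_disj} then upgrades each link of the chain to a cobordism-equivalence with vertical boundary, and concatenation again finishes the argument.

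The main thing to check is that the hypotheses line up exactly at each link. Both parts of \cref{prop:cob_disj} require disjointness and equal homology class for each consecutive pair, which \cref{thm:HQ} guarantees because all $S_i$ represent $\phi$. The vertical-boundary part additionally needs the product-boundary condition pair by pair, not merely for the endpoints $A$ and $B$. This is precisely what the strengthened statement of \cref{thm:HQ} supplies, so I expect no genuine obstacle beyond this bookkeeping.
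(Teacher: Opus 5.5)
Your proposal is correct and follows the paper's own proof: \cref{lem:easymain} gives the easy direction, and the converse, including the vertical-boundary refinement, comes from applying \cref{prop:cob_disj} to each consecutive pair of the disjoint sequence produced by \cref{thm:HQ}. You also note that the product-boundary hypothesis is needed for every consecutive pair, not only for $A$ and $B$, and that the second part of \cref{thm:HQ} supplies it; the paper's one-line proof leaves exactly this point implicit.
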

    \begin{proof}
        One implication is \cref{lem:easymain}, the other follows from \cref{thm:HQ,prop:cob_disj}.
        %
    \end{proof}

\section{Cobordisms and surgery}
We now apply the theory of handle decompositions to the cobordisms from the main theorem. A handle decomposition of a cobordism from~$A$ to~$B$ corresponds to a sequence of surgeries from~$A$ to~$B$. In light of this, a hypersurface~$B$ arises from a hypersurface~$A$ in~$M$ by \emph{(embedded) surgery of index~$k$} if there exists a smooth embedding $\tau\colon \DD^k\times \DD^{n-k}\to M\setminus\partial M$ such that
\begin{itemize}
    \item $\tau(\DD^k\times \DD^{n-k})\cap A=\tau(\partial\DD^k\times \DD^{n-k})\subseteq A\setminus\partial A$, and
    \item $\tau(\DD^k\times \DD^{n-k})\cap B=\tau(\DD^k\times \partial\DD^{n-k})\subseteq B\setminus\partial B$.
\end{itemize}
The vertical part of the cobordisms constructed in the second part of \cref{prop:cob_disj} need not be a product cobordism because it could have closed components. We therefore cannot directly construct handle decompositions, but need to allow an additional operation: A hypersurface $B$ arises from a hypersurface $A$ in $M$ by \emph{removing a boundary-parallel component} if there exist a boundary component $C$ of $M$ and a smooth embedding $f\colon C\times[0,1]\to M\setminus B$ that restricts to the identity on $C\times\{0\}$ and an orientation-preserving diffeomorphism onto a component of $A$ on $C\times\{1\}$. Here we equip $C$ with the boundary orientation. 

We obtain the following corollary. 
\begin{cor}[Equivalence through surgery]\label{cor:equiv_surg}
    Let $A$, $B$ be hypersurfaces in an $n$-dimensional manifold $M$ that only allow product cobordisms in the boundary and represent the same homology class in $\h_{n-1}(M,\partial M)$. There exists a sequence of smooth isotopies, surgeries and removing boundary-parallel components from $A$ to $B$. 
\end{cor}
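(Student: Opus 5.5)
By \cref{thm:main}, applied in its strengthened form since $A$ and $B$ only allow product cobordisms in the boundary, there is a sequence $A=S_0,S_1,\dots,S_k=B$ in which consecutive hypersurfaces are cobordant with vertical boundary. It therefore suffices to treat a single cobordism $W\subseteq M$ from $S=S_{i-1}$ to $S'=S_i$ with vertical boundary, and to realize the passage from $S$ to $S'$ by isotopies, surgeries and removals of boundary-parallel components; concatenating these sequences gives the corollary. By the construction in \cref{prop:cob_disj} we may also assume that $W$ is the closure of a single component of $M\setminus(S\cup S')$, though this is not essential.

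First I remove the closed components of $\partial_v W$. Such a component is a whole boundary component $C$ of $M$ contained in $W$. A collar $C\times[0,1]\hookrightarrow W$ lets me cut $W$ along a parallel copy $C_1$ of $C$. This changes the starting hypersurface from $S$ to $S\cup C_1$ (or $S\cup C_1\rev$, depending on orientations), and this change is exactly the inverse or forward version of ``removing a boundary-parallel component.'' The direction of each such move must be checked against the orientation convention for $C$. In the boundary orientation of $W$, the collar region $C\times[0,1]$ is a cobordism from $\emptyset$ to $C_1$ or the reverse. If I instead need to add a component, I will view it as the reverse of a removal and place it at the end of the sequence, or alternatively use the collar on the outgoing side. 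After handling all closed components, $\partial_v W$ is a product $\partial S\times[0,1]$.

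Next I apply \cref{thm:handle_decomp} to get a handle decomposition $(S\times[0,1])\cup h^{i_1}\cup\dots\cup h^{i_k}\cong W$ extending the product structure on the vertical part. Embedding this in $M$, the collar $S\times[0,1]$ shows that $S$ is isotopic to the top level $S\times\{1\}$. Attaching each handle $h^{i_j}=\DD^{i_j}\times\DD^{n-i_j}$ along $\partial\DD^{i_j}\times\DD^{n-i_j}$ in the interior of the current top boundary replaces the current hypersurface $S_j$ by $S_{j+1}=(S_j\setminus \tau(\partial\DD^{i_j}\times\DD^{n-i_j}))\cup\tau(\DD^{i_j}\times\partial\DD^{n-i_j})$, which is precisely an embedded surgery of index $i_j$. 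The embedding $\tau$ is the restriction of $\Phi$ to the handle. After a small isotopy smoothing corners, the intermediate hypersurfaces are proper smooth submanifolds, and the final one is $S'$ up to isotopy given by the diffeomorphism to $W$.

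The main technical obstacle is this corner-smoothing and isotopy bookkeeping. The intermediate levels $(S\times[0,1])\cup h^{i_1}\cup\dots\cup h^{i_j}$ have top boundaries with corners along the attaching regions. I also need the intersection conditions in the definition of surgery to hold exactly. I plan to handle this by shrinking each handle slightly and pushing each intermediate top boundary into a collar of itself, so that consecutive levels differ only inside the image of the handle. Handles attached to $\tilde B\setminus\partial\tilde B$ stay away from $\partial M$, so $\tau$ lands in $M\setminus\partial M$ as required.
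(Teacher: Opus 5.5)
Your proposal follows the same route as the paper: reduce via \cref{thm:main} to one cobordism $W$ with vertical boundary, push the closed components $C$ of $\partial_v W$ into $M$, apply \cref{thm:handle_decomp}, and read the handles as embedded surgeries. The one substantive point of the first step is the orientation of the pushed-in copy $C_1$. You leave it open, and one of the options you keep does not work.

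The orientation is forced. Let $W'$ be $W$ minus the open collar. Along $C_1$ its outward normal points towards $\partial M$, the same direction as the outward normal of $M$ along $C$. So $C_1$ receives the transported boundary orientation of $C$, and
\[
\partial_\circ W' = S'\cup C_1\cup S\rev .
\]
Equivalently, the collar region is a cobordism from $C_1$ to $\emptyset$, not from $\emptyset$ to $C_1$. Hence $W'$ is a cobordism from $S$ to $S'\cup C_1$. Its handle decomposition yields surgeries ending at $S'\cup C_1$ rather than $S'$. Then $S'$ arises by removing boundary-parallel components, with the collar serving as the required embedding $C\times[0,1]\to M\setminus S'$. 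This is exactly the paper's sentence ``since the orientation of $W$ is induced by the orientation of $M$, $W$ now gives a cobordism from $A$ to $B\cup C$''.

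Your other option, putting $C_1$ on the incoming side, makes the new starting surface $S\cup C_1\rev$; the surface $S\cup C_1$ never occurs there. You would have to add a copy of $C$ carrying the orientation opposite to its boundary orientation. That is not an allowed move, and not even the inverse of a removal as defined. Moreover, the statement only permits removals. Nor can such an addition be postponed to the end, since handles of $W'$ may be attached along $C_1$.

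So commit to the outgoing side and place all removals last; with that, your argument is complete. Your extra care about shrinking handles and smoothing corners just spells out the paper's remark that surgery is the effect of a handle attachment on the boundary.
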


\begin{proof}
    By \cref{thm:main} $A$ and $B$ are cobordism-equivalent through cobordisms with vertical boundary. Hence, it suffices to consider the case when there exists a cobordism $W$ with vertical boundary from $A$ to $B$. Then $\partial_vW$ is given by a product cobordism and a union $C$ of boundary components of $M$. Slightly push~$C$ into $M$. Since the orientation of $W$ is induced by the orientation of $M$, $W$ now gives a cobordism from $A$ to $B\cup C$.
    
    By \cref{thm:handle_decomp} $W$ has a handle decomposition as a cobordism from $A$ to $B\cup C$ . As surgery is the effect of a handle attachment in the boundary, we obtain a sequence of surgeries from a hypersurface isotopic to $A$ to $B\cup C$. By construction, $B$ arises from $B\cup C$ by removing boundary-parallel components.
\end{proof}
Just as with handle decompositions we may reorder these surgeries into a standard form. Note however that this is not a formal consequence of the corresponding statement about handle decompositions since the cobordisms in the last proof may interact with each other.
\begin{lem}[Rearranging surgeries]\label{lem:standard}
    Let $A$, $B$ be hypersurfaces in an $n$-dimensional manifold~$M$. Then every sequence of surgeries and smooth isotopies from $A$ to $B$ may be rearranged into a sequence with the same number of surgeries, and in which the surgery indices occur in increasing order.
\end{lem}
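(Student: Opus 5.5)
The plan is a bubble sort. It suffices to show that two consecutive surgeries in the wrong order can be exchanged: a surgery of index $k$ on $S_0$ with embedding $\tau_1$, producing $S_1$, followed by a surgery of index $l<k$ on $S_1$ with embedding $\tau_2$, producing $S_2$. The exchange should replace them by an isotopy, then a surgery of index $l$, then a surgery of index $k$, ending at a hypersurface isotopic to $S_2$. Isotopies between surgeries are absorbed first. By the isotopy extension theorem an ambient isotopy $\Psi_t$ of $M$ carries a surgery on $S$ to a surgery on $\Psi_1(S)$, with embedding $\Psi_1\circ\tau$. So every isotopy can be pushed to the front of the sequence, and all isotopies in this proof are ambient isotopies of $M$ supported away from $\partial M$. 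Repeated swaps then sort the indices, and no swap changes the number of surgeries.

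For the swap, write $H_i=\tau_i(\DD^{k_i}\times\DD^{n-k_i})$, with core $\tau_i(\DD^{k_i}\times 0)$ and cocore $\tau_i(0\times\DD^{n-k_i})$. The belt sphere $\tau_1(0\times\partial\DD^{n-k})\subseteq S_1$ has dimension $n-k-1$. The attaching sphere $\tau_2(\partial\DD^l\times 0)\subseteq S_1$ has dimension $l-1$, and $(l-1)+(n-k-1)<n-1$ because $l<k$. An isotopy of $S_1$ inside itself, extended to $M$, therefore makes the attaching region of $H_2$ disjoint from the belt sphere, and then from a small neighbourhood $\tau_1(\DD^k\times\DD^{n-k}_\varepsilon)$-type region of the belt sphere in $S_1$. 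Next we fix the core of $H_2$ rel boundary and put it in general position with respect to the cocore of $H_1$. The dimensions are $l$ and $n-k$, and $l+(n-k)<n$, so the two can be made disjoint. Shrinking $\tau_2$ in the $\DD^{n-l}$ direction then gives a handle $H_2$ contained in an arbitrarily thin neighbourhood of its core, and hence disjoint from a neighbourhood $N$ of the cocore of $H_1$.

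The key geometric point comes next. $H_1$ minus $N$ is a collar $\tau_1\bigl((\DD^k\setminus\tfrac12\mathring\DD^k)\times\DD^{n-k}\bigr)$ of the attaching region, and all of it can be shrunk by reparametrising $\tau_1$ radially in $\DD^k$ toward the cocore. Concretely, choose an ambient isotopy supported near $H_1$ that pushes $H_1$ into $N$ and moves $S_1$ accordingly. This isotopy fixes $N$ and can be arranged to carry $H_2$ along, so after it $H_2\cap H_1=\emptyset$. Now $H_2$ meets $S_1$ only in $S_1\setminus H_1=S_0\setminus H_1$, so $\tau_2$ defines an index-$l$ surgery directly on $S_0$. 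Because $H_1$ and $H_2$ are disjoint, $\tau_1$ is still an index-$k$ surgery on the result. Performing both gives the same hypersurface as the original order, since the two surgeries change $S_0$ inside disjoint regions.

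I expect the main obstacle to be the step just described. The handles can interact in $M$ itself, not just on the hypersurface, which is exactly the warning before the lemma. The general-position argument must take place in $M$ (core versus cocore), and the shrinking isotopy of $H_1$ must be checked to keep $H_2$ embedded and attached to the moved hypersurface, meeting it exactly in its attaching region. The strict inequality $l<k$ is used twice here, once on the hypersurface and once in $M$. Finally, every isotopy in the argument stays in a compact subset of $M\setminus\partial M$, so the boundary conditions in the definition of surgery are preserved.
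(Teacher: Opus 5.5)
Your argument is the one the paper has in mind. The paper's proof just refers to Wall's handle-rearrangement lemma (Lemma 5.2.1) and remarks that bicollars take care of embeddedness. Your bubble sort, with the attaching-sphere versus belt-sphere general position in $S_1$, is exactly that argument. Your extra step, perturbing the core of $H_2$ off the cocore of $H_1$ inside $M$ using $l+(n-k)<n$, is the right way to deal with the two handles intersecting in $M$. The paper's one-line proof covers this point only by its remark about bicollars.

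The one step that does not work as you wrote it is the shrinking of $H_1$. An ambient isotopy cannot fix $N$ pointwise while pushing $H_1\supsetneq N$ into $N$, since injectivity fails. If it also carries $H_2$ along, it preserves $H_1\cap H_2$ rather than removing it. Moreover, it is $S_0$, not $S_1$, that has to move.

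The correct version is your first description (reparametrising $\tau_1$ radially):
\begin{enumerate}
    \item Replace $\tau_1$ by its restriction to $\varepsilon\DD^k\times\DD^{n-k}$, where $N=\tau_1(\varepsilon\DD^k\times\DD^{n-k})$ is chosen disjoint from the thinned $H_2$, as your general-position step allows.
    \item Replace $S_0$ by the hypersurface $S_0'$ obtained by pushing $\tau_1(\partial\DD^k\times\DD^{n-k})$ across the product region $\tau_1\bigl((\DD^k\setminus\varepsilon\mathring\DD^k)\times\DD^{n-k}\bigr)$.
    \item Leave $S_1$ and $H_2$ untouched.
\end{enumerate}
The isotopy from $S_0$ to $S_0'$ happens before $H_2$ is used, so it does not matter that it sweeps through $H_2$. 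Now $N$ is an index-$k$ surgery from $S_0'$ to $S_1$, and $S_0'\setminus N=S_1\setminus N$. Together with $H_2\cap N=\emptyset$ this gives $H_2\cap S_0'=\tau_2(\partial\DD^l\times\DD^{n-l})$, and your concluding sentences then go through unchanged.
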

\begin{proof}
    This is essentially proven in the same manner as the analogous statements about handle decompositions (see \cite[Lemma 5.2.1]{Wall}). Note that the surgeries being embedded does not present an obstruction as the hypersurfaces allow bicollars. 
\end{proof}
The property of only allowing product cobordisms in the boundary is somewhat contrived. One setting where it arises naturally are 3-manifolds $M$ with \emph{toroidal boundary}, i.e.\ $\partial M$ is a (possibly empty) union of tori. In this case, a hypersurface $A$ in $M$ is \emph{spanning}, if
\begin{itemize}
    \item every component of $\partial M$ contains a component of $\partial A$,
    \item no component of $\partial A$ is null-homologous in $\partial M$, and
    \item components of $\partial A$ in the same boundary torus of $M$ are homologous in it.
\end{itemize}
This leads to our motivating example when $M$ is the complement of an open tubular neighborhood of a link $L$ in~$S^3$ and $A$ is the part of a Seifert surface for $L$ lying in $M$. To align terminology, we call a surgery of index 1 on a hypersurface a \emph{tube attachment} and the reverse operation a \emph{tube removal}. To obtain the desired result, it remains to ensure that we can model all surgeries in dimension 3 by tube attachments and tube removals. This can be arranged if the hypersurfaces are connected and represent a non-zero homology class. 
\begin{lem}[Equivalence through tube attachments]\label{lem:equiv_tube}
    Let $M$ be a connected 3-di\-men\-sio\-nal manifold with toroidal boundary. Let $A$, $B$ be connected spanning hypersurfaces in $M$ representing the same non-zero homology class in $\h_{2}(M,\partial M)$. Then $B$ arises from $A$ by a sequence of smooth isotopies, tube attachments and tube removals with the tube attachments occurring before the tube removals. In particular, all intermediate surfaces are also connected.
\end{lem}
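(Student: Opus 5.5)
We plan to deduce the lemma from \cref{cor:equiv_surg} and \cref{lem:standard}, and then convert the resulting surgeries into tube moves.

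The first step is to check that $A$ and $B$ only allow product cobordisms in the boundary. After a small isotopy of $B$ we may assume $\partial A\cap\partial B=\emptyset$. On each boundary torus $T$, the curves of $\partial A\cup\partial B$ are then disjoint, essential and pairwise parallel, so they cut $T$ into annuli. A cobordism in $T$ between unions of these curves is a union of complementary annuli, possibly together with all of $T$. Annuli are product cobordisms, and $T$ itself is a closed component of $\partial M$, so the condition holds.

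\cref{cor:equiv_surg} then produces a sequence of isotopies, surgeries and removals of boundary-parallel components. Spanning forces $\partial A\neq\emptyset$ on each torus and both surfaces are connected, so the boundary-parallel torus components appear only at an intermediate stage, in the pushed-in copy $B\cup C$. Our first approach is to absorb them. The removal of a parallel torus $C$ adjacent to a component of $\partial_vW$ can be traded for tube moves: tube $C$ to the adjacent sheet of $B$ along an arc in the collar, then compress it away. Alternatively, and more cleanly, we will modify $W$ near $\partial M$ by cutting off the collar of $C$, so that the vertical part is a pure product. This is where we expect real care to be needed.

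Assuming only surgeries and isotopies remain, \cref{lem:standard} orders them by increasing index. In dimension $3$, the surgeries of index $0$ and $3$ are birth and death of a $2$-sphere component, index $1$ is tube attachment, and index $2$ is tube removal (compression). Nothing in this scheme keeps the intermediate surfaces connected, which is the main obstacle: we must eliminate the $0$- and $3$-surgeries. This is the standard handle cancellation argument, carried out in the embedded setting. A $0$-surgery creates a small sphere $S$ near the current surface. Since the final surface $B$ is connected, some later $1$-surgery must connect $S$ to the rest; otherwise $S$ would persist or be removed by a $3$-surgery, and that pair can be deleted. A $0$-surgery followed by a $1$-surgery joining the new sphere to another component amounts to an isotopy, which is the embedded version of cancelling a $0$/$1$-handle pair. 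Dually, $2$/$3$ pairs cancel by considering the reversed sequence from $B$ to $A$.

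It remains to justify that the hypotheses allow this. Nonzero homology and connectedness of $A$ guarantee that no intermediate surface has a component separated off that must die, except for null-homologous spheres. More carefully, every intermediate $1$-surgery merges components or adds genus, and any closed component created must be null-homologous. The induction on the number of $0$- and $3$-surgeries therefore terminates. The result is a sequence of tube attachments followed by tube removals. Each step of the first phase never disconnects the surface, and each step of the second phase, read in reverse from $B$, is likewise a tube attachment to a connected surface, so all intermediate surfaces are connected.
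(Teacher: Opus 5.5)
Your outline (apply \cref{cor:equiv_surg}, order with \cref{lem:standard}, remove the index-$0$ and index-$3$ surgeries, and get connectedness by reading the tube removals backwards from $B$) agrees with the paper's, but two steps are not established.

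First, you leave the boundary-parallel components open (``real care''), and the tube-and-compress suggestion is not worked out. The point you are missing is that this issue is vacuous. Every intermediate hypersurface coming from \cref{thm:main} represents $[A]$. Since $A$ is spanning, $\partial[A]\in\h_1(\partial M)$ is a positive multiple of a primitive class on each boundary torus. So every intermediate surface meets every boundary torus, and no closed component of $\partial M$ lies in the vertical part of any cobordism. Hence no removal of a boundary-parallel component ever occurs: its collar would have to avoid a surface whose boundary meets that torus. The same computation is needed at the very start. A small isotopy only makes $\partial A$ and $\partial B$ transverse. To make them disjoint, and for the complementary annuli to be product cobordisms rather than cobordisms between $\emptyset$ and a pair of anti-parallel curves, you must use $\partial[A]=\partial[B]$ together with spanning. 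This shows that on each torus all these curves have the same slope and coherent orientations, after which the classification of curves on the torus applies.

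Second, and more seriously, your removal of the $0$- and $3$-surgeries is a sketch in which the hypothesis $[A]\neq0$ never does concrete work. You claim a $0$-surgery followed by a $1$-surgery tubing the new sphere to another component ``amounts to an isotopy''. That needs the sphere, at that moment, to be unaltered and to bound a ball on the side away from the tube that contains no other part of the surface. In a reordered sequence of embedded surgeries coming from overlapping cobordisms this is not automatic; for example, spheres from different $0$-surgeries may be nested, or tubed to each other first. Your dichotomy also misses the case where everything descending from $A$ is eventually removed and the descendants of the new sphere form $B$. When $\partial M=\emptyset$ this can only be excluded by showing that the descendants of $A$ keep representing $[A]\neq0$, which your last paragraph merely gestures at.

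The paper avoids handle cancellation altogether. As $A$ is connected and $[A]\neq0$, $M\setminus A$ is connected; otherwise one side of $A$ would be a cobordism from $A$ to $\emptyset$, giving $[A]=0$ by \cref{lem:easymain}. So each sphere $X$ created by an initial $0$-surgery can be reached by an arc from the correct side of $A$. Pushing a finger of $A$ along this arc is an isotopy, and pinching it off is a single tube removal producing $A\cup X$. Dually, each $3$-surgery becomes a tube attachment followed by an isotopy. A second application of \cref{lem:standard} then puts all tube attachments first.
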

\begin{proof}
Since the homology of $\partial M$ is the sum of the homology of the boundary components, it follows that components of $\partial A$ or $\partial B$ in the same component of~$\partial M$ are homologous in it. By the classification of smooth curves in a torus we may therefore assume after a smooth isotopy that $\partial A$ and $\partial B$ are disjoint, in which case $A,B$ only allow product cobordisms in the boundary. 

Now apply \cref{cor:equiv_surg} to obtain a sequence from $A$ to $B$ consisting of smooth isotopies and surgeries of index
\begin{itemize}
    \item 0, i.e.\ adding the boundary of a $3$-ball in $M$,
    \item 1, i.e.\ tube attachments,
    \item 2, i.e.\ tube removals by the usual handle duality argument, and
    \item 3, i.e.\ removing the boundary of a $3$-ball in $M$.
\end{itemize}
Note that this sequence cannot contain any removals of boundary-parallel components as the intermediate surfaces all represent the same homology class and thereby have non-trivial intersection with every boundary component of $M$.

By \cref{lem:standard} we may assume that the indices of the surgeries are increasing. It remains to express the surgeries of index 0 and 3 using tube attachments and tube removals. 

Since $A$ is connected, $M\setminus A$ has at most two components. If $M\setminus A$ has precisely two components, one of them is a cobordism from $A$ to $\emptyset$, implying by \cref{lem:easymain} that $[A]\in\h_2(M,\partial M)$ is zero. Contradiction! Hence, $M\setminus A$ is connected. Now let $X\subseteq M$ be a sphere resulting from surgery of index 0 on~$A$. Then there exists a smooth embedding $\gamma\colon[0,1]\to M$ such that
\begin{itemize}
    \item $A\cap\gamma([0,1])=\gamma(\{0\})$ and this intersection is transverse with $\gamma$ pointing in the normal direction of $A$, and
    \item $X\cap\gamma([0,1])=\gamma(\{1\})$ and this intersection is transverse with $\gamma$ pointing against the normal direction of $X$,
\end{itemize}
or with the direction of $\gamma$ reversed on both $A$ and $X$. Then we can obtain $A\cup X$ by first isotoping~$A$ and then removing a tube given by a thickening of $\gamma$.
\begin{figure}[H]
    \centering
    \begin{tikzpicture}[scale=0.9]
        \fill[black!05,rounded corners] (-2,-2) rectangle (2,2);
        \draw[color1,ultra thick] (-1.3,2) -- +(0,-4);        
        \draw[color1,->] (-1.3,1.4) node[left]{$A$} -- +(0.25,0);
    \end{tikzpicture}   \quad
    \begin{tikzpicture}[scale=0.9]
        \fill[black!05,rounded corners] (-2,-2) rectangle (2,2);
        \begin{scope}[even odd rule]
            \clip (-2,-2) rectangle (2,2) (1,0) circle(0.75);
            \fill[ultra thick,white] (-1.3,0.7) .. controls +(0,-0.6) and +(-0.5,-0.5) .. (0.47,0.53) -- (0.47,-0.53) .. controls +(-0.5,0.5) and +(0,0.6) .. (-1.3,-0.7) -- cycle;
            \fill[ultra thick,color2!50] (-1.3,0.7) .. controls +(0,-0.6) and +(-0.5,-0.5) .. (0.47,0.53) -- (0.47,-0.53) .. controls +(-0.5,0.5) and +(0,0.6) .. (-1.3,-0.7) -- cycle;
            \draw[ultra thick,color2] (-1.3,0.7) .. controls +(0,-0.6) and +(-0.5,-0.5) .. (0.47,0.53)  (0.47,-0.53) .. controls +(-0.5,0.5) and +(0,0.6) .. (-1.3,-0.7);
        \end{scope}
        \draw[color3,dashed,thick,->-=0.55] (-1.3,0) -- (0.25,0);
        \draw[color3] (-0.5,0) node[above=0.5em]{$\gamma$};
        \draw[color1,ultra thick] (-1.3,2) -- +(0,-4) (1,0) circle(0.75);
    \end{tikzpicture}   \quad
    \begin{tikzpicture}[scale=0.9]
        \fill[black!05,rounded corners] (-2,-2) rectangle (2,2);
        \draw[color1,ultra thick] (-1.3,2) -- +(0,-4) (1,0) circle(0.75);
        \draw[color1,->] (1,0.75) node[below]{$X$} -- +(0,0.25);
        \draw[color1,->] (-1.3,1.4) node[left]{$A$} -- +(0.25,0);
    \end{tikzpicture}   
\end{figure}
Dually, we may express the surgeries of index 3 by tube attachments. We obtain a sequence of smooth isotopies, tube attachments and tube removals from $A$ to $B$. After applying \cref{lem:standard} again, we may assume that the tube attachments occur before the tube removals. 
\end{proof}
The last step is to bridge the gap between Seifert surfaces with boundary of a link in the interior of a manifold and properly embedded hypersurfaces by removing small enough tubular neighborhood of the link. To ensure that the resulting hypersurfaces represent the same homology class, we use that the surrounding manifold is a rational homology sphere.
\begin{cor}[Tube equivalence for Seifert surfaces]\label{cor:seifert_tube}
    Let $A$, $B$ be Seifert surfaces for a link $L$ in a rational homology sphere $X$. Then $B$ arises from $A$ by a sequence of smooth isotopies, tube at\-tach\-ments and tube removals with the tube attachments occurring before the tube removals. In particular, all intermediate surfaces are also Seifert surfaces.
\end{cor}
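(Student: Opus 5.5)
The plan is to reduce to \cref{lem:equiv_tube} by passing to the link exterior. Fix a small closed tubular neighborhood $N(L)$ of $L$ and set $M\coloneqq X\setminus\mathring N(L)$, a connected compact oriented 3-manifold whose boundary is a union of tori, one for each component of $L$. First isotope $A$ and $B$ relative to $L$ so that near $L$ both are standard: inside $N(L)$ each is a collar of $L$ with $A\cap N(L)$ and $B\cap N(L)$ consisting of annuli meeting $\partial N(L)$ in longitudes. Then $A_M\coloneqq A\cap M$ and $B_M\coloneqq B\cap M$ are proper hypersurfaces in $M$. They are connected, since removing an open collar of the boundary from a connected surface leaves it connected and diffeomorphic to the original. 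Each boundary torus contains exactly one component of $\partial A_M$ and of $\partial B_M$. These components are longitudes, hence not null-homologous in the torus, so both hypersurfaces are spanning.

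The main step is to check $[A_M]=[B_M]\neq 0$ in $\h_2(M,\partial M)$, and this is where the rational homology sphere hypothesis enters. By Lefschetz duality, $\h_2(M,\partial M)\cong\h^1(M)\cong\operatorname{Hom}(\h_1(M),\ZZ)$ is torsion-free. The class $[A_M]$ corresponds to the homomorphism $\gamma\mapsto\gamma\cdot A_M$. Since $X$ is a $\QQ$-homology sphere, a Mayer--Vietoris argument gives $\h_1(M;\QQ)\cong\QQ^{|L|}$, generated by the meridians $\mu_i$. As $A$ is a Seifert surface, $\mu_i\cdot A_M=1$ for every $i$, and the same holds for $B_M$. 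So the two homomorphisms agree after tensoring with $\QQ$. Since both take values in $\ZZ$, they agree on all of $\h_1(M)$: for any $\gamma$, some multiple $m\gamma$ is a rational combination of meridians up to torsion, and torsion maps to zero. Hence $[A_M]=[B_M]$, and this class is nonzero because it evaluates to $1$ on a meridian. I expect this homological comparison, especially handling torsion in $\h_1(M)$ cleanly, to be the main point needing care.

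Now \cref{lem:equiv_tube} gives a sequence of smooth isotopies, tube attachments and tube removals from $A_M$ to $B_M$ in $M$, with the attachments before the removals and all intermediate surfaces connected. The isotopies may move the boundary curves within $\partial M$. Each intermediate surface $S$ is reglued to $L$ by adding annuli in $N(L)$ from $\partial S$ to $L$, which gives a surface $\hat S$ in $X$ with $\partial\hat S=L$. It is connected, compact and oriented, so it is a Seifert surface. The isotopies of $\partial S$ inside the tori extend through these annuli to isotopies of $\hat S$ fixing $L$, using that isotopic curves in a torus cobound product annuli, which we slide along. Tube operations take place in $\mathring M$ and so transfer verbatim. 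Finally, $\hat A_M$ is isotopic to $A$ and $\hat B_M$ to $B$ by the initial standardization, which completes the argument.
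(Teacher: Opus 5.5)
Your proof is correct. It has the same architecture as the paper's: standardize $A$ and $B$ near $L$, pass to the exterior $M$, check the hypotheses of \cref{lem:equiv_tube}, and reglue. The one real difference is how you verify $[A_M]=[B_M]$.

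The paper shows that $\h_2(M,\partial M)\to\h_1(\partial M)$ is injective. It does this by passing to $\QQ$-coefficients and counting dimensions in the long exact sequence of $(M,\partial M)$, using Alexander duality to get $\dim\h_2(M;\QQ)=k-1$. It then notes that $\partial A_M$ and $\partial B_M$ are isotopic in $\partial M$.

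You work on the Lefschetz-dual side instead. You view both classes as homomorphisms $\h_1(M)\to\ZZ$ and use Mayer--Vietoris to show that the meridians span $\h_1(M;\QQ)$. This is precisely the dual statement: surjectivity of $\h_1(\partial M;\QQ)\to\h_1(M;\QQ)$, already from the meridian half. Since $\mu_i\cdot A_M$ is the intersection number of $\mu_i$ with $\partial A_M$ in the torus, your comparison depends only on the boundary curves, just as in the paper. You dispose of torsion because the target $\ZZ$ is torsion-free, rather than by appealing to freeness of both groups.

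Your route has two small advantages. It makes the non-vanishing of the class explicit (evaluation $\pm1$ on a meridian), which the paper leaves implicit. You also spell out the regluing of intermediate surfaces and the transfer of boundary-moving isotopies back to $X$, which the paper does not discuss.

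Two wording points. In the torsion step, $m\gamma$ should be an \textit{integral} combination of meridians modulo torsion; alternatively, just use $\h_1(M)\otimes\QQ\cong\h_1(M;\QQ)$. For extending isotopies of $\partial S$, it is cleaner to extend the induced ambient isotopy of $\partial M$ over a collar of $\partial N(L)$ in $N(L)$, tapered to the identity before reaching $L$. Saying that isotopic curves in a torus cobound annuli is not quite right, since such curves need not be disjoint.
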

\begin{proof}
    After a smooth isotopy of $A$ and $B$ we may assume that there exists a tubular neighborhood $\tau\colon L\times\DD^2\to X$ such that 
    \begin{align*}
        A\cap\tau(L\times\DD^2)&=\tau(L\times\{t\cdot a\mid 0\leq t\leq 1\})\\
        B\cap\tau(L\times\DD^2)&=\tau(L\times\{t\cdot b\mid 0\leq t\leq 1\})
    \end{align*}
    for distinct $a,b\in\partial\DD^2$. Then $M\coloneqq X\setminus\tau(L\times\OD^2)$ is a connected 3-dimensional manifold with toroidal boundary and $\hat A\coloneqq M\cap A$, $\hat B\coloneqq M\cap B$ are connected spanning hypersurfaces in $M$.

    To apply \cref{lem:equiv_tube} it remains to show that $[\hat A]=[\hat B]$ in $\h_2(M,\partial M)$. As $\partial\hat A$ and $\partial\hat B$ are smoothly isotopic in $\partial M$, it suffices to show that $\h_2(M,\partial M)\to\h_1(\partial M)$ is injective. Since both are free $\ZZ$-modules, we may consider rational coefficients where we have an exact sequence
    \[
    0\!\to\!\h_3(M,\partial M;\QQ)\!\to\!\h_2(\partial M;\QQ)\!\to\!\h_2(M;\QQ)\!\to\!\h_2(M,\partial M;\QQ)\!\to\!\h_1(\partial M;\QQ).
    \]
    Note that 
    \begin{itemize}
        \item $\dim\h_3(M,\partial M;\QQ)=1$,
        \item $\dim\h_2(\partial M;\QQ)=k$, where $k$ is the number of components of $L$, and
        \item $\dim\h_2(M;\QQ)=k-1$ by Alexander duality in the rational homology sphere~$X$.
    \end{itemize}
    Hence, $\h_2(\partial M;\QQ)\to\h_2(M;\QQ)$ is surjective and $\h_2(M,\partial M;\QQ)\to\h_1(\partial M;\QQ)$ is injective as desired.
\end{proof}


\printbibliography

\flushleft
---------

\textsc{Stefan Friedl},   \texttt{\href{mailto:sfriedl@gmail.com}{sfriedl@gmail.com}}

\textsc{Tobias Hirsch},   \texttt{\href{mailto:Tobias.Hirsch@ur.de}{Tobias.Hirsch@ur.de}}

\textsc{Clayton McDonald},   \texttt{\href{mailto:claytkm@gmail.com}{claytkm@gmail.com}}

\textsc{José Pedro Quintanilha},   \texttt{\href{mailto:jquintanilha@mathi.uni-heidelberg.de}{jquintanilha@mathi.uni-heidelberg.de}}

\textsc{Daniel Zach},   \texttt{\href{mailto:Daniel.Zach@stud.uni-regensburg.de}{Daniel.Zach@stud.uni-regensburg.de}}

\end{document}